\newtheorem{thm}{Theorem}[section]
\newtheorem{cor}[thm]{Corollary}
\newtheorem{lem}[thm]{Lemma}
\theoremstyle{definition}
\newtheorem{defi}[thm]{Definition}
\theoremstyle{remark}
\newtheorem{rmk}[thm]{\bf Remark}
\numberwithin{equation}{section}
\numberwithin{figure}{section}
\def \cl{\text{Cl}}
\def\deg{\text{deg}}
\def \d{\delta}
\def \deg{\text{deg}}
\def \dim{\text{dim}}
\def \im{\text{im~}}
\def \K{\mathcal{K}}
\def \L{\mathcal{L}}
\def \lk{\text{Lk}}
\def\la{\lambda}
\def \lamax{\lambda_{\max}}
\def \p{\partial}
\def \Q{\mathcal{Q}}
\def \R{\mathbb{R}}
\def \s{\sigma}
\def \st{\text{St}}
\def \spec{\text{~Spec}}
\DeclareMathOperator{\sgn}{sgn}
\begin{document}
\title[Largest eigenvalue and balancedness of complex]{The largest Laplacian eigenvalue and the balancedness of simplicial complexes}

\author[Y.-Z. Fan]{Yi-Zheng Fan*}
\address{Center for Pure Mathematics, School of Mathematical Sciences, Anhui University, Hefei 230601, P. R. China}
\email{fanyz@ahu.edu.cn}
\thanks{*The corresponding author.
This work was supported by National Natural Science Foundation of China (Grant Nos. 12331012, 12171002).}

\author[H.-F. Wu]{Hui-Feng Wu}
\address{Center for Pure Mathematics, School of Mathematical Sciences, Anhui University, Hefei 230601, P. R. China}
\email{wuhf@stu.ahu.edu.cn}

\author[Y. Wang]{Yi Wang}
\address{Center for Pure Mathematics, School of Mathematical Sciences, Anhui University, Hefei 230601, P. R. China}
\email{wangy@ahu.edu.cn}

\subjclass[2020]{55U05, 05E45, 47J10, 05C65}

\keywords{Simplicial complex, Laplace operator, largest eigenvalue, signed graph, balancedness}

\begin{abstract}
Let $K$ be a simplical complex, and let $\L_i^{up}(K), \Q_i^{up}(K)$ be the $i$-th up Laplacian and signless Laplacian of $K$, respectively.
In this paper we proved that the largest eigenvalue of $\L_i^{up}(K)$ is not greater than the largest eigenvalue of $\Q_i^{up}(K)$; furthermore, if $K$ is $(i+1)$-path connected, then the equality holds if and only if the $i$-th incidence signed graph $B_i(K)$ of $K$ is balanced.
As an application we provided an upper bound for the largest eigenvalue of the $i$-th up Laplacian of  $K$, which improves the bound given by Horak and Jost and generalizes the result of Anderson and Morley on graphs.
We characterized the balancedness of simplicial complexes under operations such as wedge sum, join, Cartesian product and duplication of motifs.
For each $i \ge 0$, by using wedge sum or duplication of motifs, we can construct an infinitely many $(i+1)$-path connected simplicial complexes $K$ with $B_i(K)$ being balanced.
\end{abstract}

\maketitle

\section{Introduction}
The research of the Laplacian operator on graphs has a long and rich history. In
the mid-19th century Kirchhoff \cite{Kirch} studied the electric networks and formulated the celebrated matrix-tree theorem.
In the early 1970s, Fiedler \cite{Fied} established a relationship between the  second smallest Laplacian eigenvalue and the connectivity of a graph.
The normalized graph Laplacian was introduced by Bottema \cite{Bot} who studied a transition probability on graphs; also see \cite{Chung93} for more details.
 Eckmann \cite{Eckmann} extended the Laplace operator from graphs to simplicial complexes, providing  the discrete version of the Hodge theorem, which can be expressed as
$$ \text{ker}(\d_i^* \d_i+ \d_{i-1} \d_{i-1}^*) = \tilde{H}^i(K,\mathbb{R}),$$
where $\d_i^* \d_i+ \d_{i-1} \d_{i-1}^*$ is the higher order combinatorial Laplacian of the simplicial complex $K$.

There have been many studies on the Laplacian eigenvalues of simplicial complexes.
Duval and Reiner \cite{Duv} show that the combinatorial Laplace operator of a shifted simplicial complex has an integral spectrum.
 By using combinatorial Laplace operators, Dong and Wachs \cite{Dong} gave an elegant proof of Bouc's result \cite{Bouc} on the decomposition of the representation of the symmetric group on the homology of a matching complex, and proved the spectrum of the Laplace operator of the matching complex is also integral.
   Some remarkable recent results on combinatorial Laplacian for simplicial complexes involve lots of algebraic and geometric aspects for complexes, which are applied in the study of independent number, chromatic number, theta number \cite{Bachoc, Golubev,HJ13B}.

Horak and Jost \cite{HJ13B} developed a general framework for Laplace operators defined in terms of the combinatorial structure of a simplicial complex, including the combinatorial Laplacian and the normalized Laplacian.
In their paper they provided some upper bounds for the largest Laplacian eigenvalues in terms of degrees and weights of faces.
Rotman \cite{Rot} introduced the covering complex, which was adopted by Gustavson \cite{Gust} to study the Laplacian spectrum.
Horak and Jost \cite{HJ13B} proved that the Laplacian spectrum of a covering complex contains the Laplacian spectrum of its underlying complex.
Fan and Song \cite{Fan} show that the spectrum of a $2$-fold covering simplicial complex is the union of the spectrum of the underlying simplicial complex and the spectrum of an incidence-signed simplicial complex, which generalizes Bilu and Linial's result on graphs \cite{BL}.
The signless Laplacians of a simplicial complex \cite{Kaufman2} are systematically
studied with many interesting applications \cite{Kaufman,Lubotzky1,Lubotzky2,Lubotzky3}.
As a generalization, the (signless) $1$-Laplacian on simplicial complex was introduced by Luo and Zhang \cite{Luo} for the connection of its spectrum to the combinatorial properties.

In this paper, we study the largest eigenvalue of the $i$-th up Laplacian of a simplicial complex $K$, and show that it is not greater than the largest eigenvalue of the $i$-th up signless Laplacian of $K$; moreover, if $K$ is $(i+1)$-path connected, then the equality holds if and only if the $i$-th incidence signed graph $B_i(K)$ of $K$ is balanced.
As an application, we provide an upper bound for the largest eigenvalue of the $i$-th up Laplacian of a simplicial complex $K$, which improves the bound given by Horak and Jost \cite{HJ13B}, and generalizes the result of Anderson and Morley \cite{AM} from graphs to complexes.
The balancedness of $B_i(K)$ is an important property in discussion of the largest Laplacian eigenvalue.
We characterize the balancedness of the incidence graphs of simplicial complexes under wedge sum, join, Cartesian product, duplication of motifs, and construct an infinite family of $(i+1)$-path connected simplicial complexes $K$ with balanced $B_i(K)$ for each $i \ge 0$.

\section{Preliminaries}
\subsection{Simplical complex and Laplace operator}
Let $V$ be a finite set.
An \emph{abstract simplicial complex} (simply called a \emph{complex}) $K$ over $V$ is a collection of the subsets of $V$ which is closed under inclusion.
An \emph{$i$-face} or an \emph{$i$-simplex} of $K$ is an element of $K$ with cardinality $i+1$.
The \emph{dimension} of an $i$-face is $i$, and the dimension of $K$ is the maximum dimension of all faces of $K$.
The faces which are maximum under inclusion are called \emph{facets}.
We say $K$ is \emph{pure} if all facets have the same dimension.
So, a complex can  be considered as a hypergraph with facets as the edges of the hypergraph, and a pure complex will correspond to a uniform hypergraph, where a hypergraph is called \emph{uniform} if all edges have the same size.

We assume that $\emptyset \in K$, called the empty simplex with dimension $-1$.
Let $S_i(K)$ be the set of all $i$-faces of $\K$, where $S_{-1}(K)=\{\emptyset\}$.
The $p$-skeleton of $K$, written $K^{(p)}$, is the set of all simplices of $K$ of dimension less than or equal to $p$.
So, $K^{(1)}\backslash \{\emptyset\}$ is the usual graph with vertex set $V(K)$ consisting of $0$-faces usually called \emph{vertices}, and  edge set $E(K)$ consisting of $1$-faces usually called \emph{edges}.
We say $K$ is \emph{connected} if the graph $K^{(1)}\backslash \{\emptyset\}$ is connected.
An \emph{$i$-path} of length $m$ in $K$ is an ordering of $i$-simplices $F_1<F_2<\cdots<F_m$, such that $F_i \cap F_j$ is an $(i-1)$-face of $K$ if and only if $|j-l|=1$.
When $F_m$ coincides with $F_1$, we say that $L$ is an \emph{$i$-cycle} of length $(m-1)$.
The complex $K$ is \emph{$i$-path connected} if any two $i$-faces $F_1$ and $F_2$ of $K$ are connected by an $i$-path.

We say a face $F$ is \emph{oriented} if we assign an ordering of its vertices and write it as $[F]$.
Two ordering of the vertices of $F$ are said to determine the \emph{same orientation} if there is an even permutation transforming one ordering into the other.
If the permutation is odd, then the orientation are opposite.
The \emph{$i$-chain group} of $K$ over $\mathbb{R}$, denoted by $C_i(K,\mathbb{R})$, is the vector space over $\R$ generated by all oriented $i$-faces of $K$ modulo the relation $[F_1]+[F_2]=0$, where $[F_1]$ and $[F_2]$ are two different orientations of a same face.
The \emph{cochain group} $C^i(K,\R)$ is defined to be the dual of $C_i(K,\mathbb{R})$, i.e. $C^i(K,\R)=\text{Hom}(C_i(K,\mathbb{R}),\R)$, which are generated by the dual basis consisting of $[F]^*$ for all $F \in S_i(K)$, where
$$ [F]^*([F])=1, [F]^*([F'])=0 \text{~for~} F' \ne F.$$
The functions $[F]^*$ are called the \emph{elementary cochains}.
Note that $C_{-1}(K, \R)=\R \emptyset$, identified with $\R$,
and $C^{-1}(K, \R)=\R \emptyset^*$, also can be identified with $\R$, where $\emptyset^*$ is the identify function on the empty simplex.

For each integer $i=0,1,\ldots,\dim K$, The \emph{boundary map} $\p_i: C_i(K,\R) \to C_{i-1}(K,\R)$ is defined to be
$$\p_i([v_0,\ldots,v_i])=\sum_{j=0}^i (-1)^j[v_0,\ldots,\hat{v}_j,\ldots,v_i],$$
for each oriented $i$-face $[v_0,\ldots,v_i]$ of $K$,
where $\hat{v}_j$ denotes the vertex $v_j$ has been omitted.
In particular, $\p_0[v]=\emptyset$ for each $v \in S_0(K)$.
We will have the \emph{augmented chain complex} of $K$:
$$  \cdots \longrightarrow C_{i+1}(K,\R) \stackrel{\p_{i+1} }{\longrightarrow} C_i(K,\R) \stackrel{\p_{i} }{\longrightarrow} C_{i-1}(K,\R) \longrightarrow \cdots \longrightarrow  C_{-1}(K,\R) \longrightarrow 0,$$
satisfying $\p_i \circ \p_{i+1}=0$.
The \emph{$i$-th reduced homology group} of $K$ is defined to be $\tilde{H}_i(K)=\ker \p_i / \im \p_{i+1}$, and the dimension $\tilde{\beta}_i$ of $\tilde{H}_i(K)$ is called the \emph{$i$-th Betti number}.
The complex $K$ is called \emph{acyclic} if its reduced homology group vanishes in all dimensions.

Here, by abuse of notation, we use $\p \bar{F}$ to denote the set of all $i$-faces in the boundary of $\bar{F}$ when $\bar{F} \in S_{i+1}(K)$.
If $[\bar{F}]:=[v_0,\ldots,v_i]$ and $[F_j]:=[v_0,\ldots,\hat{v}_j,\ldots,v_i]$,
then we define $\sgn([F_j], \p[\bar{F}])=(-1)^j$, namely, the sign of $[F_j]$ appeared in $\p[\bar{F}]$, and $\sgn([F], \p[\bar{F}])=0$ if $F \notin \p \bar{F}$.
The \emph{coboundary map} $\delta_{i-1}: C^{i-1}(K,\R) \to C^i(K,\R)$ is the conjugate of $\p_i$ such that $ \d_{i-1} f = f \p_i.$
So
$$ (\d_{i-1} f)([v_0,\ldots,v_i])=\sum_{j=0}^i (-1)^jf([v_0,\ldots,\hat{v}_j,\ldots,v_i]).$$
Similarly, we have the \emph{augmented cochain complex} of $K$:
$$  \cdots \longleftarrow C^{i+1}(K,\R) \stackrel{\delta_{i} }{\longleftarrow} C^i(K,\R) \stackrel{\delta_{i-1} }{\longleftarrow} C^{i-1}(K,\R) \longleftarrow \cdots  {\longleftarrow} C^{-1}(K,\R) \longleftarrow  0,$$
satisfying $\delta_{i}\circ \delta_{i-1}=0$.
The \emph{$i$-th reduced cohomology group}  is defined to be $$\tilde{H}^i(K,\R)=\ker \delta_i / \im \delta_{i-1}.$$
As vector spaces, $\tilde{H}^i(K,\R)$ is the dual of $\tilde{H}_i(K,\R)$ and is isomorphic to $\tilde{H}_i(K,\R)$.

 By introducing inner products in $C^i(K,\R)$ and $C^{i+1}(K,\R)$ respectively, we have the adjoint $\d^*_i: C^{i+1}(K,\R) \to C^i(K,\R)$ of $\d_i$, which is defined by
$$ ( \delta_i f_1, f_2 )_{C^{i+1}} =( f_1, \delta^*_i f_2)_{C^{i}}$$ for all $f_1 \in C^i(K,\R), f_2 \in C^{i+1}(K,\R)$.

\begin{defi}\cite{HJ13B}
The following three operators are defined on $C^i(K,\R)$, where $K$ is a complex with an orientation $\s$.

(1) The $i$-dimensional combinatorial up Laplace operator or simply the \emph{$i$-up Laplace operator}: $$  \L_i^{up}(K,\s):=\d_i^* \d_i.$$

(2) The $i$-dimensional combinatorial down Laplace operator or the \emph{$i$-down Laplace operator}:
$$ \L_i^{down}(K,\s):=\d_{i-1}\d_{i-1}^*.$$

(3) The $i$-dimensional combinatorial Laplace operator or the \emph{$i$-Laplace operator}:
 $$\L_i(K,\s)=\L_i^{up}(K,\s)+\L_i^{down}(K,\s)=\delta^*_i \delta_i+\delta_{i-1}\delta^*_{i-1}.$$
 \end{defi}

Some we simply use $\L_i^{up}(K)$, $\L_i^{down}(K)$ and $\L_i(K)$ if the orientation $\s$ is clear  from the context.
In fact, the spectra of $\L_i^{up}(K)$ , $\L_i^{down}(K)$ and $\L_i(K)$ are all independent of the orientation; see Lemma \ref{reverse}.
All the there Laplacians are self-adjoint, nonnegative and compact.
Define a weight function on all faces of $K$:
$$ w: \bigcup_{i=-1}^{\dim K} S_i(K) \to \R^+,$$
so that the inner product in $C^i$ is defined as
$$ (f,g)_{C^i}=\sum_{F \in S_i(K)}f([F])g([F])([F]^*,[F]^*)=\sum_{F \in S_i(K)} w(F) f([F])g([F]).$$

In this paper, the weight or the inner product is implicit from the context for the Laplace operator.
If $w \equiv 1$ on all faces, then the underlying Laplacian is the \emph{combinatorial Laplace operator}, denoted by $L_i(K)$, as discussed in \cite{Duv, Fried}.
If the weights of all facets are equal to $1$, and $w$ satisfies the normalizing condition:
$$ w(F)=\sum_{\bar{F} \in S_{i+1}(K): F \in \p \bar{F}}w(\bar{F}),$$
for every $F \in S_i(K)$ which is not a facet of $K$,
then $w$ determines the \emph{normalized Laplace operator}, denoted by $\Delta_i(K)$, as analyzed in \cite{HJ13B}.

 Horak and Jost \cite{HJ13A, HJ13B} give explicit formulas for $\L_i^{up}$ and $\L_i^{down}$.
 Here we consider the matrix forms of the above operators.
 Let $D_i$ be the matrix of $\d_i: C^i \to C^{i+1}$ under the basis consisting of elementary cochains.
Then $D_i$ satisfies
 $$(D_i)_{[\bar{F}]^*, [F]^*}=\sgn([F], \p [\bar{F}]),$$
%Let $W_i$ be the diagonal matrix consisting of the weight on $S_i(K)$.
and the matrix $D_i^*$ of $ \d_i^*$ satisfies
$$ (D_i^*)_{[F]^*, [\bar{F}]^*}=\frac{w(\bar{F})}{w(F)} \sgn([F], \p [\bar{F}]).$$
So
$$ D_i^*=W_i^{-1} D_i^\top W_{i+1},$$
where $W_i$ and $W_{i+1}$ are diagonal matrices such that $(W_i)_{[F]^*, [F]^*}=w(F)$ and $(W_{i+1})_{ [\bar{F}]^*, [\bar{F}]^*}=w(\bar{F})$.
Hence
\begin{equation}\label{DefL} \L_i^{up}(K)=W_i^{-1} D_i^\top W_{i+1}D_i,~ \L_i^{down}(K)=D_{i-1} W_{i-1}^{-1}D_{i-1}^\top W_i.
\end{equation}

\subsection{Incidence-signed complex}
Let $K$ be a complex, and let $F, \bar{F} \in K$.
If $F \in \p \bar{F}$, then $(F,\bar{F})$ is an \emph{incidence} of $K$.
We will introduce the sign of incidences of $K$ and incidence-signed complex.

\begin{defi}
The \emph{incidence-signed complex} is a pair $(K, \varsigma)$, where $K$ is a complex, and $\varsigma: K \times K \to \{-1,0,1\}$ such that
$\varsigma(F, \bar{F}) \in \{-1,1\}$ if $F \in \p \bar{F}$, and $\varsigma(F, \bar{F})=0$ otherwise.
\end{defi}

The \emph{signed boundary map} $\p_i^\varsigma: C_i(K,\R) \to C_{i-1}(K,\R)$ is defined to be
$$\p_i^\varsigma[v_0,\ldots,v_i]=\sum_{j=0}^i (-1)^j[v_0,\ldots,\hat{v}_j,\ldots,v_i]
\varsigma(\{v_0,\ldots,\hat{v}_j,\ldots,v_i\},\{v_0,\ldots,v_i\}).$$

The \emph{signed co-boundary map} $\d^\varsigma_{i}: C^{i}(K,\R) \to C^{i+1}(K,\R)$ is the conjugate of $\p_{i+1}^\varsigma$, namely, for all $f \in C^{i}(K,\R)$,
$$\d^\varsigma_{i}f = f \p_{i+1}^\varsigma.$$

The \emph{signed adjoint}  $(\d_{i}^\varsigma)^*: C^{i+1}(K,\R) \to C^{i}(K,\R)$ is the adjoint of $\d_{i}^\varsigma$ satisfying
$$ ( \delta_{i}^\varsigma f_1, f_2 )_{C^{i+1}} =( f_1, (\delta^\varsigma_{i})^* f_2)_{C^{i}}$$ for all $f_1 \in C^{i}(K,\R), f_2 \in C^{i+1}(K,\R)$.

\begin{defi}
Let $(K,\varsigma)$ be a signed complex.

(1) The $i$-up Laplace operator of $(K,\varsigma)$ is defined to be  $\L_i^{up}(K,\varsigma)=(\d_i^\varsigma)^* \d_i^\varsigma$.

(2) The $i$-down Laplace operator of $(K,\varsigma)$ is defined to be $\L_i^{down}(K,\varsigma)=\d_{i-1}^\varsigma(\d_{i-1}^\varsigma)^*$.

(3) The $i$-Laplace operator of $(K,\varsigma)$ is defined to be $\L(K,\varsigma)=\L_i^{up}(K,\varsigma)+\L_i^{down}(K,\varsigma)$.

\end{defi}

Let $D_i^\varsigma$ be the matrix of $\d_i^\varsigma: C^i \to C^{i+1}$ under the basis consisting of elementary cochains.
Then
 $$(D_i^\varsigma)_{[\bar{F}]^*, [F]^*}=\sgn([F], \p [\bar{F}])\varsigma(F,\bar{F}).$$
The matrix $(D_i^\varsigma)^*$ of $ (\d_i^\varsigma)^*$ satisfies
$$ ((D_i^\varsigma)^*)_{[F]^*, [\bar{F}]^*}=\frac{w(\bar{F})}{w(F)} \sgn([F], \p [\bar{F}])\varsigma(F,\bar{F}),$$
where $w$ is a weight function on the faces of $K$.
So
$$ (D_i^\varsigma)^*=W_i^{-1} (D_i^\varsigma)^\top W_{i+1}.$$
Hence the matrices of $\L_i^{up}(K,\varsigma)$ and $\L_i^{down}(K,\varsigma)$ are respectively
\begin{equation}\label{LiupS} \L_i^{up}(K,\varsigma)=W_i^{-1} (D_i^\varsigma)^\top W_{i+1}D_i^\varsigma, \L_i^{down}(K,\varsigma)=D^\varsigma_{i-1} W_{i-1}^{-1}(D^\varsigma_{i-1})^\top W_i.
\end{equation}

If $\varsigma \equiv 1$, then
$ \L_i^{up}(K,\varsigma)=\L_i^{up}(K)$ and $\L_i^{down}(K,\varsigma)=\L_i^{down}(K).$
From this viewpoint, the incidence-signed complex $(K,\varsigma)$ is a generalization of the usual complex.
If $\varsigma(F,\bar{F}) = \sgn([F], \p [\bar{F}])$ for all pairs $(F,\bar{F})$ with $F \in \p \bar{F}$,
then  $D_i^\varsigma = |D_i|$, which is a nonnegative matrix, where $|A|:=[|a_{ij}|]$ if $A=[a_{ij}]$.
In this case, $\L_i^{up}(K,\varsigma)= |\L_i^{up}(K)|$, and
$\L_i^{down}(K,\varsigma)= |\L_i^{down}(K)|$.
%Let $|D_i|$ be the $i$-th incidence matrix of $K$ with rows indexed by $S_{i+1}(K)$ and columns indexed by $S_i(K)$, which is defined to be
%$ |D_i|_{\bar{F}, F}=1$ if $F \in \p \bar{F}$,  and $ |D_i|_{\bar{F}, F}=0$ otherwise.
The \emph{$i$-up signless Laplace operator} of $K$ is defined to be
$$ \Q_i^{up}(K):=|\L_i^{up}(K)|=W_i^{-1} |D_i|^\top W_{i+1}|D_i|,$$
and  the \emph{$i$-down signless Laplace operator} of $K$ is defined to be
$$ \Q_i^{down}(K):=|\L_i^{down}(K)|=|D_{i-1}|W_{i-1}^{-1} |D_{i-1}|^\top W_{i}.$$
Note that, if taking $W_0$ and $W_1$ both be identity matrix, then $\Q_0^{up}(K)$ is called the \emph{signless Laplacian} of the graph $K^{(1)}$ \cite{Hae}, which is studied in \cite{Des} for the nonbipartiteness of a graph; see the survey \cite{Cve}.

The incidence-signed complexes play an important role in the spectra of $2$-fold covering complexes.
Fan and Song \cite{Fan} proved that if $\phi:K \to L$ is a $2$-fold covering map from a complex $K$ to a complex $L$, and the weight satisfies $w_K(F)=w_L(\phi(F))$ for all $F\in S_i(K) \cup S_{i+1}(K)$, then
$$ \spec \L_i^{up}(K) = \spec \L_i^{up}(L) \cup \spec \L_i^{up}(L,\varsigma),$$
where $\varsigma$ is a sign on $(F,\bar{F}) \times S_i(L) \cup S_{i+1}(L)$ arisen from the covering.

\section{The largest eigenvalue of Laplace operator of simplicial complex}
In this section we will investigate the largest eigenvalue $\lamax(\L_i^{up}(K))$ of the $i$-up Laplaican $\L_i^{up}(K)$ of a complex $K$, where $\lamax(A)$ denotes the largest eigenvalue of a square matrix $A$ with all eigenvalues real.
By definition, $\lamax(\L_{i+1}^{down}(K))=\lamax(\L_i^{up}(K))$.
So it is enough to deal with the $i$-up Laplaican, and the results for the $(i+1)$-down Laplacian can be obtained similarly.

We should introduce some notions on signed graphs for preparation.
%Before we discuss the relation between the spectrum of $\L_i^{up}(K)$ and that of $\Q_i^{up}(K)$, we introduce the notion of balancedness of a signed graph.
A \emph{signed graph} is a graph $G$ with a signing on its edges $\varsigma: E(G) \to \{1,-1\}$, denoted by $\Gamma=(G,\varsigma)$.
In particular, if $\varsigma \equiv 1$, namely all edges have positive signs, we will use $(G,+)$ to denote the signed graph.
The \emph{sign} of a subgraph $H$ of $G$, denoted by $\varsigma(H)$, is defined to be the product of the signs of all edges of $H$.
The signed graph $\Gamma$ is called \emph{balanced} if all signs of the cycles of $G$ are positive.
The signed graph and its balancedness were introduced by Harary \cite{Harary} on research of social psychology.

Switching is an important operation on signed graphs which preserves the balancedness.
A \emph{switching} $s_v$ on $\Gamma$ at a vertex $v$  is an operation which reverses the signs of all edges incident to $v$, and keeps the signs of other edges invariant.
The signed graphs $(G,\varsigma)$ is called \emph{switching equivalent} to a signed graph $(G,\varsigma')$ if $(G,\varsigma')$ can be obtained from $(G,\varsigma)$ by a sequence of switchings on vertices.
It is easy to see switching equivalence is an equivalent relation on signed graphs.

\begin{lem}\cite[Corollary 3.3]{Z1}\label{balance}
A signed graph $(G,\varsigma)$ is balanced if and only if it can be switched to $(G,+)$.
\end{lem}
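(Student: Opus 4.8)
The plan is to prove the two implications separately, reserving the real work for the ``only if'' direction. For the ``if'' direction, I would first establish that switching preserves the sign of every cycle, from which it follows that a signed graph switchable to the manifestly balanced all-positive graph $(G,+)$ must itself be balanced. To see that a single switching $s_v$ fixes all cycle signs: a cycle $C$ either avoids $v$, in which case none of its edge signs change, or passes through $v$, in which case exactly two of its edges are incident to $v$ and both have their signs reversed, so $\varsigma(C)$ is multiplied by $(-1)^2=1$. Iterating over the sequence of switchings, every cycle of $(G,\varsigma)$ acquires the same sign as the corresponding cycle of $(G,+)$, namely $+1$, and hence $(G,\varsigma)$ is balanced.

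For the ``only if'' direction, suppose $(G,\varsigma)$ is balanced. I may assume $G$ is connected, handling each component separately. Fix a root $r$ and a spanning tree $T$ of $G$, and define a potential $\theta\colon V(G)\to\{1,-1\}$ by $\theta(r)=1$ and $\theta(u)=\varsigma(P_u)$ for $u\neq r$, where $P_u$ is the unique path from $r$ to $u$ in $T$. I would then switch precisely at the vertices $v$ with $\theta(v)=-1$. The key bookkeeping fact is that this composite switching sends the sign of each edge $\{u,w\}$ to $\varsigma(\{u,w\})\,\theta(u)\theta(w)$, since an edge's sign is flipped once for each endpoint that gets switched.

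It remains to check that every edge becomes positive, and this is where the balancedness hypothesis enters and where the main subtlety lies. For a tree edge $\{u,w\}$ with $w$ the child of $u$, the defining relation $\theta(w)=\theta(u)\varsigma(\{u,w\})$ gives $\varsigma(\{u,w\})\,\theta(u)\theta(w)=\varsigma(\{u,w\})^2\,\theta(u)^2=1$. For a non-tree edge $e=\{u,w\}$, consider the fundamental cycle $C_e$ consisting of $e$ together with the tree path from $u$ to $w$; balancedness forces $\varsigma(C_e)=1$, so that tree path has sign $\varsigma(e)$. On the other hand, $\theta(u)\theta(w)=\varsigma(P_u)\varsigma(P_w)$ equals the sign of that same tree path, because the edges shared by $P_u$ and $P_w$, namely those running from $r$ down to the meet of $u$ and $w$, contribute a square and cancel. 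Hence $\varsigma(e)\,\theta(u)\theta(w)=\varsigma(e)^2=1$ as well, so $(G,\varsigma)$ switches to $(G,+)$. The delicate point to get right is precisely this cancellation of the shared tree edges in $\theta(u)\theta(w)$, which identifies the potential product with the tree-path sign and lets the fundamental-cycle computation close.
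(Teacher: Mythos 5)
Your proof is correct, and in fact the paper offers no proof of this lemma at all: it is quoted verbatim from Zaslavsky \cite[Corollary 3.3]{Z1}, so there is no in-paper argument to compare against. Your two directions are both sound. The ``if'' direction correctly observes that a switching at $v$ flips exactly the two cycle edges meeting $v$ (a cycle containing an edge incident to $v$ necessarily passes through $v$, so the avoids/passes dichotomy is exhaustive), whence cycle signs are switching invariants. The ``only if'' direction is the classical spanning-tree potential argument, which is essentially Zaslavsky's own: your $\theta(u)=\varsigma(P_u)$ is his switching function, the identity that the composite switching sends $\varsigma(\{u,w\})$ to $\varsigma(\{u,w\})\,\theta(u)\theta(w)$ is the right bookkeeping (switchings at distinct vertices commute, so the composite is well defined), and you correctly isolate the one delicate point --- that the edges of $P_u\cap P_w$ above the meet of $u$ and $w$ cancel in the product $\theta(u)\theta(w)$, identifying it with the sign of the tree path from $u$ to $w$, after which balancedness applied to the fundamental cycle of each non-tree edge closes the argument. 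One cosmetic remark: tree edges are subsumed by the non-tree case in spirit, but your separate treatment via $\theta(w)=\theta(u)\varsigma(\{u,w\})$ is clean and avoids degenerate fundamental cycles. For the bipartite incidence graphs $B_i(K)$ actually used in this paper the graphs are simple, so you need not worry about loops or parallel edges, where Zaslavsky's more general setting would require extra care.
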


Let $K$ be a complex with an orientation $\s$, and let $F, \bar{F} \in K$.
If $F \in \p \bar{F}$, then $(F,\bar{F})$ is an incidence of $K$.
The \emph{$i$-th incidence graph} $B_i(K)$ is a bipartite graph with vertex set $S_i(K)\cup S_{i+1}(K)$ such that $\{F,\bar{F}\}$ is an edge if and only if $F\in\partial\bar{F}$.
Actually $B_i(K)$ is a signed graph such that the sign of an edge $\{F,\bar{F}\}$ is given by  $\sgn([F],\partial[\bar{F}])$, denoted by $(B_i(K),\s)$.
Note that a \emph{signature matrix} is a diagonal matrix with $\pm 1$ on its diagonal.

\begin{lem}\label{reverse}
Let $K$ be a complex with an orientation $\s$, and let $(B_i(K),\s)$ be the corresponding $i$-th incidence signed graph.
Let $\tau$ be another orientation of $K$.
Then the following results hold.

$(1)$ $(B_i(K),\tau)$ is obtained from $(B_i(K),\s)$ only by applying
 a switching $s_F$ on $B_i(K)$ on an $i$- or $(i+1)$-face $F$ of $K$
 if and only if $\tau$ is obtained from $\s$ only by reversing
the orientation of the face $F$, where the reversed orientation of a $0$-face $[v]$ is defined to be $-[v]$.

$(2)$ If $\tau$ is obtained from $\s$ only by reversing
the orientation of $i$-face $F$, then
$$ \L_i^{up}(K,\tau)=S_F \L_i^{up}(K,\s)S_F, \L_i^{down}(K,\tau)=S_F \L_i^{down}(K,\s)S_F$$
where $S_F$ is a signature matrix defined on the $i$-th elementary cochains of $K$ with only $-1$ on the diagonal corresponding $[F]^*$.

$(3)$ If $\tau$ is obtained from $\s$ only by reversing
the orientation of an $(i+1)$-face $F$, then
$$ \L_i^{up}(K,\tau)=\L_i^{up}(K,\s);$$
and if $\tau$ is obtained from $\s$ only by reversing
the orientation of an $(i-1)$-face $F$,
$$ \L_i^{down}(K,\tau)=\L_i^{down}(K,\s).$$
\end{lem}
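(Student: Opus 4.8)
The plan is to reduce everything to one elementary observation about how a single orientation reversal acts on the incidence matrices, and then to read off the three conclusions from the matrix formulas \eqref{DefL}. Since $\p$ is linear, replacing the orientation $[F]$ of a face $F$ by $-[F]$ flips the sign $\sgn([F],\p[\bar F])$ whenever $F$ occupies the lower slot of an incidence, and flips $\sgn([G],\p[F])$ whenever $F$ occupies the upper slot, while leaving every sign not involving $F$ unchanged. Translating this into the matrices of the excerpt: reversing an $i$-face $F$ sends $D_i \mapsto D_iS_F$ (its column $[F]^*$ is negated) and $D_{i-1}\mapsto S_FD_{i-1}$ (its row $[F]^*$ is negated); reversing an $(i+1)$-face $\bar F$ sends $D_i \mapsto S_{\bar F}D_i$; and reversing an $(i-1)$-face $G$ sends $D_{i-1}\mapsto D_{i-1}S_G$. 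Here each $S_{(\cdot)}$ is the signature matrix on the appropriate cochain space carrying $-1$ in the slot of the reversed face.

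For part $(1)$ I would observe that the edge $\{F,\bar F\}$ of $B_i(K)$ carries the sign $\sgn([F],\p[\bar F])$; by the observation above, reversing the orientation of a single face negates exactly the edge-signs of $B_i(K)$ meeting that face and fixes all others, which is precisely the effect of the switching $s_F$ at the corresponding vertex. Thus reversing the orientation at $F$ and switching $B_i(K)$ at $F$ are the same operation under the dictionary $\s \mapsto (B_i(K),\s)$, and the two single-operation statements are therefore equivalent in both directions. The convention $-[v]$ for a reversed $0$-face is exactly what makes this include the case $i=0$.

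For parts $(2)$ and $(3)$ I would substitute the transformed matrices into \eqref{DefL}, using only that each $S_{(\cdot)}$ is a symmetric involution commuting with the diagonal weight matrices. When an $i$-face $F$ is reversed, $S_F$ lands on the \emph{outside} of both products,
\begin{align*}
\L_i^{up}(K,\tau)&=W_i^{-1}(D_iS_F)^\top W_{i+1}(D_iS_F)=S_F\,\L_i^{up}(K,\s)\,S_F,\\
\L_i^{down}(K,\tau)&=(S_FD_{i-1})W_{i-1}^{-1}(S_FD_{i-1})^\top W_i=S_F\,\L_i^{down}(K,\s)\,S_F,
\end{align*}
giving the conjugations of part $(2)$. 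When instead an $(i+1)$-face (resp. an $(i-1)$-face) is reversed, the signature matrix sits in the \emph{interior} of the product, flanking $W_{i+1}$ (resp. $W_{i-1}^{-1}$); commuting it through that diagonal weight and applying $S^2=I$ makes it cancel, so $\L_i^{up}(K,\tau)=\L_i^{up}(K,\s)$ and $\L_i^{down}(K,\tau)=\L_i^{down}(K,\s)$, which is part $(3)$.

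There is no real obstacle beyond careful bookkeeping: the entire lemma turns on whether the reversed face indexes a row or a column of $D_i$ or $D_{i-1}$, since that determines whether $S$ multiplies from the outside (surviving as a conjugation, part $(2)$) or from the inside (cancelling against its weight, part $(3)$). Once this slot is identified in each of the four cases, every identity collapses to a one-line manipulation with commuting diagonal matrices.
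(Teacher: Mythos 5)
Your proposal is correct and follows essentially the same route as the paper's own proof: part (1) via the sign identity $\sgn(-[F],\p[\bar F])=-\sgn([F],\p[\bar F])=\sgn([F],\p(-[\bar F]))$, and parts (2)--(3) by substituting $D_i\mapsto D_iS_F$, $D_{i-1}\mapsto S_FD_{i-1}$ (resp. $S_FD_i$, $D_{i-1}S_F$) into the matrix formulas \eqref{DefL} and using that $S_F$ is a symmetric involution commuting with the diagonal weight matrices. Your outside-versus-inside framing of where the signature matrix lands is exactly the mechanism the paper exploits, just stated more explicitly.
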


\begin{proof}
(1) For each $F \in S_i(K)$ and $\bar{F} \in S_{i+1}(K)$ such that $F \in \p \bar{F}$, as
$$ \sgn(-[F], \p [\bar{F}])=-\sgn([F], \p [\bar{F}])=\sgn([F], \p (- [\bar{F}])),$$
the result (1) follows.

(2)  Let $D_i,D_{i-1}$ be the matrices of $\d_i,\d_{i-1}$ with respect to the orientation $\s$, respectively.
Then
the matrices of $\d_i,\d_{i-1}$ with respect to the orientation $\tau$ are $D_i^{\tau}:=D_i S_F$ and $D_{i-1}^{\tau}:=S_F D_{i-1}$.
By (\ref{DefL}),
$ \L_i^{up}(K,\s)=W_i^{-1} D_i^\top W_{i+1}D_i$.
So
$$ \L_i^{up}(K,\tau)=W_i^{-1}(D_iS_F)^\top W_{i+1} D_iS_F=S_F (S_F W_i^{-1} S_F) D_i^\top W_{i+1} D_i S_F=S_F \L_i^{up}(K,\s)S_F,$$
as $S_F=S_F^\top=S_F^{-1}$ and $S_F W_i^{-1} S_F=W_i^{-1}$.
The proof of the second equality is similar.

(3) The matrices of $\d_i$ with respect to  the orientation $\tau$ is $D_i^{\tau}:=S_F D_i$, so that
$$ \L_i^{up}(K,\tau)=W_i^{-1} (S_F D_i)^\top W_{i+1} S_F D_i=W_i^{-1} D_F^\top (S_F W_{i+1} S_F) D_i=\L_i^{up}(K,\s).$$
The proof of the second equality is similar by noting that the matrices of $\d_{i-1}$ with respect to  the orientation $\tau$ is $D_{i-1}^{\tau}:= D_{i-1}S_F$.
\end{proof}

By Lemma \ref{reverse}, we find that the different orientations of faces results in the Laplace operators which are similar by a signature matrix.
So the Laplacian spectrum is independent of the orientations.

Observe that the complex $K$ is $(i+1)$-path connected if and only if the $i$-th incidence graph $B_i(K)$ is connected.
When discussing the spectrum of $\L_i^{up}(K)$, it suffices to deal with the case of $K$ being $(i+1)$-path connected; otherwise, $\L_i^{up}(K)$ is a direct sum of the $i$-up Laplacians of some $(i+1)$-path connected components.

\begin{thm}\label{balance}
Let $K$ be a complex with an orientation $\s$.
Then
\begin{equation}\label{main}
\lamax(\L_i^{up}(K))\leq\lamax(\Q_i^{up}(K)),
\end{equation}
with equality if $B_i(K)$ is balanced.
If further $K$ is $(i+1)$-path connected,
then the equality holds in (\ref{main}) if and only if $(B_i(K),\s)$ is balanced.
\end{thm}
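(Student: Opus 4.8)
The plan is to argue through the variational characterization of the largest eigenvalue, combined with a Perron--Frobenius analysis of the signless operator. Both $\L_i^{up}(K)$ and $\Q_i^{up}(K)$ are self-adjoint with respect to the weighted inner product $(\cdot,\cdot)_{C^i}$, so by the Rayleigh quotient
\[
\lamax(\L_i^{up}(K)) = \max_{f \ne 0} \frac{(\d_i f, \d_i f)_{C^{i+1}}}{(f,f)_{C^i}}, \qquad \lamax(\Q_i^{up}(K)) = \max_{g \ne 0} \frac{(\tilde{\d}_i g, \tilde{\d}_i g)_{C^{i+1}}}{(g,g)_{C^i}},
\]
where $\tilde{\d}_i$ is the operator with matrix $|D_i|$, i.e.\ $(\tilde{\d}_i g)([\bar{F}]) = \sum_{F \in \p \bar{F}} g([F])$. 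For the inequality (\ref{main}) I would take an eigenfunction $f$ of $\L_i^{up}(K)$ for $\lamax(\L_i^{up}(K))$ and set $g=|f|$ (pointwise absolute value). The triangle inequality gives, for every $\bar{F} \in S_{i+1}(K)$,
\[
\Big| \sum_{F \in \p \bar{F}} \sgn([F], \p[\bar{F}])\, f([F]) \Big| \le \sum_{F \in \p \bar{F}} |f([F])| = (\tilde{\d}_i g)([\bar{F}]),
\]
and since all weights are positive and $(f,f)_{C^i}=(g,g)_{C^i}$, the Rayleigh quotient of $\L_i^{up}(K)$ at $f$ is bounded above by that of $\Q_i^{up}(K)$ at $g$, hence by $\lamax(\Q_i^{up}(K))$. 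This needs no connectivity assumption and yields (\ref{main}).

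For the sufficiency of balancedness I would use Lemma \ref{reverse} together with the switching characterization of balancedness \cite[Corollary 3.3]{Z1}. If $(B_i(K),\s)$ is balanced, it switches to $(B_i(K),+)$; by Lemma \ref{reverse}$(1)$ each elementary switching is realized by reversing the orientation of a face, so composing them produces an orientation $\tau$ under which every incidence sign $\sgn([F],\p[\bar{F}])$ equals $+1$. For this $\tau$ the matrix $D_i^{\tau}$ is exactly the $0$--$1$ incidence matrix $|D_i|$, whence $\L_i^{up}(K,\tau)=\Q_i^{up}(K)$. Since the spectrum of $\L_i^{up}$ is independent of orientation (the remark following Lemma \ref{reverse}), we conclude $\lamax(\L_i^{up}(K))=\lamax(\Q_i^{up}(K))$.

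The necessity, assuming $(i+1)$-path connectedness, is the main obstacle and rests on a Perron--Frobenius argument. Conjugating by $W_i^{1/2}$ shows $\Q_i^{up}(K)$ is similar to the symmetric entrywise-nonnegative matrix $M^\top M$, where $M = W_{i+1}^{1/2}|D_i|W_i^{-1/2}$; its off-diagonal $(F,F')$-entry is positive exactly when $F$ and $F'$ lie in a common $(i+1)$-face. Because $K$ is $(i+1)$-path connected, $B_i(K)$ is connected, and tracing a walk $F_0,\bar{F}_0,F_1,\ldots$ in $B_i(K)$ shows that $M^\top M$ is irreducible (the connectedness also rules out isolated $i$-faces). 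Hence $\lamax(\Q_i^{up}(K))$ is a simple eigenvalue with a strictly positive eigenvector. Now assume equality in (\ref{main}) and let $f$ be a $\lamax(\L_i^{up}(K))$-eigenfunction. Both displayed inequalities above must then be equalities: the second forces $|f|$ to be a maximizer of the Rayleigh quotient of $\Q_i^{up}(K)$, hence, by simplicity and positivity of the Perron vector, $f([F]) \ne 0$ for every $i$-face $F$; the first then forces, for each $\bar{F}$, the nonzero reals $\sgn([F],\p[\bar{F}])\,f([F])$ with $F \in \p\bar{F}$ to share a common sign $\e(\bar{F}) \in \{\pm 1\}$.

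Finally I would extract a switching certifying balancedness. Define $\theta(F)=\sgn(f([F]))$ on $i$-faces and $\theta(\bar{F})=\e(\bar{F})$ on $(i+1)$-faces. The sign condition reads $\sgn([F],\p[\bar{F}])\,\theta(F)=\theta(\bar{F})$ for every incidence, so the switched sign of each edge $\{F,\bar{F}\}$ of $B_i(K)$ equals $\theta(F)\,\sgn([F],\p[\bar{F}])\,\theta(\bar{F})=\theta(\bar{F})^2=+1$. Thus $(B_i(K),\s)$ switches to $(B_i(K),+)$ and is balanced by \cite[Corollary 3.3]{Z1}. The delicate points I would treat carefully are the irreducibility claim underlying the Perron--Frobenius step and the passage from the pointwise equality case of the triangle inequality to a globally consistent $\pm 1$ labelling $\theta$ of all $i$- and $(i+1)$-faces.
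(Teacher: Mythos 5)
Your proposal is correct and follows essentially the same route as the paper's proof: the triangle-inequality/Rayleigh-quotient argument applied to $|f|$ for the bound \eqref{main}, Perron--Frobenius applied to the nonnegative irreducible $\Q_i^{up}(K)$ for necessity, and the switching--reorientation correspondence of Lemma \ref{reverse} combined with Zaslavsky's criterion for sufficiency. The only differences are expository: you certify balancedness directly through the switching function $\theta$ where the paper reorients $i$- and $(i+1)$-faces step by step, and you spell out the irreducibility of $\Q_i^{up}(K)$ (via conjugation by $W_i^{1/2}$ and walks in $B_i(K)$), which the paper asserts without proof.
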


\begin{proof}
Let $\lambda:=\lamax(\L_i^{up}(K))$ be the largest eigenvalue of $\L_i^{up}(K)$ with $f$ as an associated eigenfunction. Then we have
$$(\lambda f,f)_{C^i}=(\L_i^{up}(K)f,f)_{C^i}=(\delta_i^*\delta_i f,f)_{C^i}=(\delta_i f ,\delta_i f)_{C^i}=(f\partial_{i+1},f\partial_{i+1})_{C_i}.$$
So,
\begin{equation}\label{inner1}
\begin{aligned}
\lambda(f,f)_{C^i}&=\sum\limits_{\bar{F}\in
S_{i+1}(K)}((f\partial_{i+1})([\bar{F}]))^2\omega(\bar{F})\\
&=\sum\limits_{\bar{F}\in
S_{i+1}(K)}\left(f\left(\sum\limits_{F\in\partial\bar{F}}
\sgn([F],\partial[\bar{F}])[F]\right)\right)^2\omega(\bar{F})\\
&=\sum\limits_{\bar{F}\in
S_{i+1}(K)}\left(\sum\limits_{F\in\partial\bar{F}}
\sgn([F],\partial[\bar{F}])f([F])\right)^2\omega(\bar{F})\\
&\leq\sum\limits_{\bar{F}\in S_{i+1}(K)}\left(\sum\limits_{F\in\partial\bar{F}}
|f([F])|\right)^2\omega(\bar{F})\\
&= (\Q_i^{up}(K)|f|, |f|).
\end{aligned}
\end{equation}
As $(f,f)_{C^i}=(|f|,|f|)_{C^i}$, by the Min-Max Theorem \cite{HJ13B}, we have
$$ \lamax(\L_i^{up}(K)) =\la \le \lamax(\Q_i^{up}(K)).$$

Now suppose that $K$ is $(i+1)$-path connected and $ \lamax(\L_i^{up}(K)) = \lamax(\Q_i^{up}(K))$.
By Eq. (\ref{inner1}),
$|f|$ is an eigenfunction of $\Q_i^{up}(K)$ associated with $\lamax(\Q_i^{up}(K))$.
As $K$ is $(i+1)$-path connected, $\Q_i^{up}(K)$ is nonnegative and irreducible.
So, by Perron-Frobenious Theorem of nonnegative matrices, $|f|$ is a positive vector, which implies that $f([F]) \ne 0$ for all $F \in S_i(K)$.
By the last inequality in (\ref{inner1}),
for each $ \bar{F}\in S_{i+1}(K)$,
\begin{equation}\label{inner2}
\left(\sum\limits_{F\in\partial\bar{F}}
\sgn([F],\partial[\bar{F}])f([F])\right)^2=\left(\sum\limits_{F\in\partial\bar{F}}
|f([F])|\right)^2.
\end{equation}
Replacing $[F]$ by $-[F]$ if $f([F])<0$, then Eq. (\ref{inner2}) still holds as
$$\sgn(-[F],\partial([\bar{F}])f(-[F])=\sgn([F],\partial[\bar{F}])f([F]),
|f(-[F])|=|f([F])|.$$
So we can assume that $f$ is positive.
Similarly, replacing $[\bar{F}]$ by $-[\bar{F}]$,  Eq. (\ref{inner2}) also holds as
$$\sum\limits_{F\in\partial\bar{F}}
\sgn([F],\partial(-[\bar{F}])f([F])=-\sum\limits_{F\in\partial\bar{F}}
\sgn([F],\partial([\bar{F}])f([F]).$$
So we can assume for each $  \bar{F}\in S_{i+1}(K)$, there exists an $F \in \p  \bar{F}$ such that $\sgn([F],\partial([\bar{F}])=1$.
Now returning to Eq. (\ref{inner2}), by the above two assumptions, for all $\bar{F} \in S_{i+1}(K)$ and $F \in \p \bar{F}$,
$$ \sgn([F],\partial[\bar{F}])>0,$$
which implies that $(B_i(K), \s)$ can be transformed into $(B_i(K), +)$ by a sequence of switchings on $i$-faces and/or $(i+1)$-faces by
Lemma \ref{reverse}(1).
So, $(B_i(K), \s)$ is balanced by Lemma \ref{balance}.

Conversely, if $(B_i(K), \s)$ is balanced, then by Lemma \ref{balance}, $(B_i(K), \s)$ is switching equivalent to $(B_i(K), +)$ by applying a sequence of switchings at some $i$-faces and/or $(i+1)$-faces of $K$.
Let $\tau$ be the resulting orientation of $K$ after the above switchings.
By Lemma \ref{reverse}(2-3),
$$\L_i^{up}(K, \s)=S \Q_i^{up}(K)S,$$
where $S$ is the product of $S_F$ over the reversed $i$-faces of $K$.
So $\L_i^{up}(K, \s)$ is similar to $\Q_i^{up}(K)$, and hence have the same largest eigenvalues.
\end{proof}

The following result for combinatorial Laplacian of graphs is known.
Here we give a proof for the completeness and also for the general Laplacian with arbitrary weights.

\begin{cor}
Let $K$ be a connected complex.
Then
\begin{equation}\label{main1}
\lamax(\L_0^{up}(K))\leq\lamax(\Q_0^{up}(K)),
\end{equation}
with equality  if and only if the $1$-skeleton $K^{(1)}$ of $K$ is bipartite.
\end{cor}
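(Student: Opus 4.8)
The plan is to deduce this corollary directly from Theorem~\ref{balance} by specializing to the case $i=0$. When $i=0$, the complex $K$ is $(i+1)=1$-path connected precisely when $K^{(1)}$ is connected, which holds by hypothesis; thus the ``if and only if'' clause of Theorem~\ref{balance} applies, and the inequality \eqref{main1} is just the specialization of \eqref{main}. It remains to translate the condition ``$(B_0(K),\s)$ is balanced'' into ``$K^{(1)}$ is bipartite.''

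First I would unwind the definition of the $0$-th incidence signed graph $B_0(K)$. Its vertex set is $S_0(K)\cup S_1(K)$, i.e.\ the vertices and edges of the graph $K^{(1)}$, with an edge of $B_0(K)$ joining a vertex $v$ to an edge $e$ exactly when $v\in\p e$; each edge $e=\{u,v\}$ of $K^{(1)}$ therefore contributes exactly two edges to $B_0(K)$, namely $\{u,e\}$ and $\{v,e\}$. This means $B_0(K)$ is nothing but the subdivision graph of $K^{(1)}$ (each edge of $K^{(1)}$ subdivided by one new vertex). The signs, from the paragraph preceding Lemma~\ref{reverse}, are $\sgn([v],\p[e])$. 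Orienting each edge $e=[u,v]$ so that $\p_1[u,v]=[v]-[u]$, the two incidence signs of $e$ are $+1$ and $-1$, hence the sign of the length-two path $u$--$e$--$v$ in $B_0(K)$ is $-1$.

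The key step is then a cycle-correspondence argument. A cycle $C$ in $B_0(K)$ alternates between $S_0$-vertices and $S_1$-vertices (since $B_0(K)$ is bipartite), so it has even length $2\ell$ and passes through $\ell$ edge-vertices $e_1,\dots,e_\ell$ of $K^{(1)}$; contracting each edge-vertex recovers a closed walk, in fact a cycle of length $\ell$, in $K^{(1)}$. Each passage through an edge-vertex $e_j$ traverses one of the two length-two paths described above, each of which contributes sign $-1$, so $\varsigma(C)=(-1)^\ell$. Hence every cycle of $B_0(K)$ is positive if and only if every cycle of $K^{(1)}$ has even length, i.e.\ $K^{(1)}$ is bipartite. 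Combining this equivalence with Theorem~\ref{balance} yields the corollary.

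The only point requiring care is the cycle correspondence itself: I must check that cycles of $K^{(1)}$ and cycles of $B_0(K)$ are in sign-consistent bijection, and in particular that it suffices to test only the generating cycles (equivalently, that balancedness is detected on a cycle basis). This is routine since the sign of any cycle factors through the edge-signs of the subdivision and the contraction map sends a cycle basis of $B_0(K)$ to a cycle basis of $K^{(1)}$; the factor $(-1)^\ell$ depends only on the length $\ell$ of the image cycle. I would also note, for completeness, that the forward implication of \eqref{main1} (equality when $K^{(1)}$ is bipartite) already follows from the unconditional ``equality if $B_i(K)$ is balanced'' clause of Theorem~\ref{balance}, so connectedness is needed only for the converse, exactly as in the theorem.
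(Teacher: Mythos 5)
Your proof is correct and takes essentially the same route as the paper: specialize the main theorem to $i=0$ (noting $1$-path connectedness is exactly connectedness of $K^{(1)}$) and translate balancedness of $(B_0(K),\s)$ into bipartiteness of $K^{(1)}$ via the observation that each edge $e=\{u,v\}$ carries incidence signs $+1$ and $-1$, so a cycle of length $k$ in $K^{(1)}$ corresponds to a cycle of sign $(-1)^k$ in $B_0(K)$. Your explicit remark that $B_0(K)$ is the subdivision graph of $K^{(1)}$, so that \emph{every} cycle of $B_0(K)$ arises from a cycle of $K^{(1)}$, is a small point the paper leaves implicit in its converse direction, but otherwise the arguments coincide.
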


\begin{proof}
Let $\s$ be an orientation of $K$.
By Theorem \ref{main}, it suffices to prove that $(B_0(K),\s)$ is balanced if and only if $K^{(1)}$ is bipartite.
Let $C$ be a cycle of $K^{(1)}$ on vertices $v_1,\ldots,v_k$.
Then $B_0(K)$ contains a cycle $v_1, \{v_1,v_2\}, v_2, \ldots, v_k, \{v_k,v_1\},v_1$, whose sign is
$$ \varsigma(C)=\prod_{i=1}^k \sgn([v_i], \p [e_i])\sgn([v_{i+1}], \p [e_i]),$$
where $[e_i]=[v_i,v_{i+1}]$ or $[e_i]=-[v_i,v_{i+1}]$, and $v_{k+1}=v_1$.
As each term in the above product is $-1$, $s(C)=(-1)^k$.
So, if  $(B_0(K),\s)$ is balanced, then $k$ is even, and hence $K^{(1)}$ is bipartite.
Conversely, if $K^{(1)}$ is bipartite, then $k$ is even so that $s(C)=1$, implying that $(B_0(K),\s)$ is balanced.
\end{proof}

As an application of Theorem \ref{main}, we give an upper bound for the largest Laplacian eigenvalue.
Note that the \emph{degree} $\deg F$ of an $i$-face $F$ of a complex $K$ is defined to be the sum of the weights of all simplices $\bar{F}$ such that $F \in \p \bar{F}$, namely,
$\deg F=\sum_{\bar{F} \in S_{i+1}(K): F \in \p \bar{F}} w(\bar{F})$.

\begin{thm}
Let $K$ be a complex.
Then
\begin{equation}\label{upp1}\lamax(\L_i^{up}(K)) \le \max_{F \in S_{i+1}(K)} \sum_{E \in \p F} \frac{\deg E}{w(E)}.
\end{equation}
In particular, for combinatorial Laplacian (or $w \equiv 1$),
\begin{equation}\label{upp2}\lamax(L_i^{up}(K)) \le \max_{F \in S_{i+1}(K)} \sum_{E \in \p F} \deg E;
\end{equation}
and if $K$ is $(k+1)$-path connected, then the equality in (\ref{upp2}) holds if and only if $B_i(K)$ is balanced and the sum of the degrees of all $i$-faces in each $(i+1)$-face of $K$ is constant.
\end{thm}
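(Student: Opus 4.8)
The plan is to chain two inequalities: first the comparison $\lamax(\L_i^{up}(K)) \le \lamax(\Q_i^{up}(K))$ supplied by Theorem \ref{balance}, and then an upper bound on $\lamax(\Q_i^{up}(K))$ via a max-row-sum estimate. The decisive point is that one must \emph{not} bound $\lamax(\Q_i^{up}(K))$ by the row sums of $\Q_i^{up}(K)$ itself, since those are indexed by $i$-faces and only give the weaker estimate $\max_E (i+2)\deg E/w(E)$ of Horak--Jost type; instead I would pass to the dual operator indexed by $(i+1)$-faces. Writing $A=W_i^{-1}|D_i|^\top$ and $B=W_{i+1}|D_i|$, we have $\Q_i^{up}(K)=AB$, while $BA$ is conjugate (via $W_{i+1}$) to $\Q_{i+1}^{down}(K)=|D_i|W_i^{-1}|D_i|^\top W_{i+1}$. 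Since $AB$ and $BA$ share the same nonzero spectrum and each of $\Q_i^{up}(K),\Q_{i+1}^{down}(K)$ is conjugate to a positive semidefinite matrix (so has real nonnegative eigenvalues with $\lamax$ equal to the spectral radius), we get $\lamax(\Q_i^{up}(K))=\lamax(\Q_{i+1}^{down}(K))$; this is the signless analogue of the identity $\lamax(\L_i^{up}(K))=\lamax(\L_{i+1}^{down}(K))$ recorded at the start of the section.

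Next I would compute the row sums of the nonnegative matrix $\Q_{i+1}^{down}(K)$. Its $(\bar F,\bar F')$-entry is $w(\bar F')\sum_{E\in\p\bar F\cap\p\bar F'}1/w(E)$, so the $\bar F$-row sum equals $\sum_{E\in\p\bar F}\frac{1}{w(E)}\sum_{\bar F':\,E\in\p\bar F'}w(\bar F')=\sum_{E\in\p\bar F}\deg E/w(E)$, after interchanging the order of summation and using $\deg E=\sum_{\bar F':\,E\in\p\bar F'}w(\bar F')$. Applying the elementary bound $\rho(M)\le\max_i\sum_j M_{ij}$ for a nonnegative matrix $M$ then gives $\lamax(\Q_{i+1}^{down}(K))\le\max_{\bar F\in S_{i+1}(K)}\sum_{E\in\p\bar F}\deg E/w(E)$, which together with the first paragraph proves (\ref{upp1}); specializing $w\equiv 1$ yields (\ref{upp2}).

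For the equality characterization under $(i+1)$-path connectedness, observe that equality in (\ref{upp2}) forces both intermediate inequalities to be equalities, since the middle step is always an equality. By Theorem \ref{balance}, the equality $\lamax(\L_i^{up}(K))=\lamax(\Q_i^{up}(K))$ holds if and only if $(B_i(K),\s)$ is balanced. For the second inequality I would invoke the Perron--Frobenius fact that for a nonnegative \emph{irreducible} matrix $M$ the spectral radius equals $\max_i\sum_j M_{ij}$ precisely when all row sums of $M$ coincide; here $\Q_{i+1}^{down}(K)$ is irreducible because $(i+1)$-path connectedness is equivalent to connectedness of $B_i(K)$, which forces any two $(i+1)$-faces to be joined through a chain of shared $i$-faces. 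With $w\equiv 1$ the equal-row-sum condition reads exactly that $\sum_{E\in\p\bar F}\deg E$ is constant over all $(i+1)$-faces $\bar F$, so equality in (\ref{upp2}) holds iff $B_i(K)$ is balanced and this sum-of-degrees condition holds. The main obstacle I anticipate lies in this last step: one must carefully deduce irreducibility of the dual operator $\Q_{i+1}^{down}(K)$ from the path-connectedness hypothesis, apply the correct Perron--Frobenius equality criterion, and check that the ``if'' direction genuinely produces equality by exhibiting the all-ones vector as the Perron eigenvector when the row sums coincide.
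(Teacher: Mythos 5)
Your proposal is correct and follows essentially the same route as the paper: chain through $\lamax(\L_i^{up}(K))\le\lamax(\Q_i^{up}(K))=\lamax(\Q_{i+1}^{down}(K))$, bound the latter by its maximum row sum, which the paper likewise computes to be $\sum_{E\in\p F}\deg E/w(E)$, and settle the equality case for $w\equiv 1$ exactly as you do, via Perron--Frobenius applied to the nonnegative irreducible matrix $\Q_{i+1}^{down}(K)$ together with the balancedness criterion, with the all-row-sums-equal characterization giving the degree-sum condition. The only difference is cosmetic: you spell out $\lamax(\Q_i^{up}(K))=\lamax(\Q_{i+1}^{down}(K))$ via the $AB$ versus $BA$ similarity argument, a step the paper treats as immediate from the definitions.
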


\begin{proof}
By Theorem \ref{main},
$$ \lamax(\L_i^{up}(K)) \le \lamax(\Q_i^{up}(K))=\lamax(\Q_{i+1}^{down}(K)).$$
For an $F \in S_{i+1}(K)$, let $r_{F}$ be the row sum of $\Q_{i+1}^{down}(K)$ corresponding to $[F]^*$.
We have
$$ r_{F}=\sum_{E \in \p F} \frac{w(F)}{w(E)} + \sum_{F'\in S_{i+1}(K): \atop F \cap F'=E \in S_i(K)} \frac{w(F')}{w(E)}=\sum_{E \in \p F} \sum_{F' \in S_{i+1}(K): \atop E \in \p F'}\frac{w(F')}{w(E)}=\sum_{E \in \p F} \frac{\deg E}{w(E)}.$$
So we have
\begin{equation}\label{pf-upp}
\lamax(\Q_{i+1}^{down}(K))  \le \max_{F \in S_{i+1}(K)} r_{F}
 = \max_{F \in S_{i+1}(K)} \sum_{E \in \p F} \frac{\deg E}{w(E)},
\end{equation}
which yields the upper bound in (\ref{upp1}).

If $w \equiv 1$, we have
$$\lamax(L_i^{up}(K)) \le \max_{F \in S_{i+1}(K)} \sum_{E \in \p F} \deg E,
$$
Note that $\Q_{i+1}^{down}(K)$ is nonnegative and irreducible as $K$ is $(k+1)$-path connected.
If the equality holds, then
$$\lamax(L_i^{up}(K)) =\lamax(Q_i^{up}(K))=\lamax(Q_{i+1}^{down}(K))= \max_{F \in S_{i+1}(K)} \sum_{E \in \p F} \deg E.$$
From the first equality, we know
$B_i(K)$ is balanced from Theorem \ref{main};
and from the third equality, we find that $r_F$ is constant as $Q_{i+1}^{down}(K)$ is nonnegative and irreducible, namely, the sum of the degrees of all $i$-faces in each $(i+1)$-face of $K$ is constant.
Conversely, if $r_F$ is constant, then $\lamax(Q_{i+1}^{down}(K))=r_F$.
Also, as $B_i(K)$ is balanced, by Theorem \ref{main}, $\lamax(L_i^{up}(K)) =\lamax(Q_i^{up}(K))=\lamax(Q_{i+1}^{down}(K))$.
The result follows.
\end{proof}

\begin{rmk}
Anderson and Morley \cite{AM} proved that for a graph $G$, the largest eigenvalue of the Laplacian matrix $L(G)$ (namely, $L_0^{up}(G)$) holds
$$ \lamax(L(G)) \le \max_{\{u,v\} \in E(G)} (\deg u + \deg v).$$
So our result (\ref{upp2}) generalizes the above bound from graphs to complexes.

Horak and Jost \cite{HJ13B} proved that
$$\lamax(\L_i^{up}(K)) \le (i+2) \frac{\max_{E \in S_i(K)} \deg E}{\min_{E \in S_i(K)} w(E)}.$$
For an $F \in S_{i+1}(K)$, it has $(i+2)$ $i$-faces $E \in \p F$, so
$$\sum_{E \in \p F} \frac{\deg E}{w(E)} \le  \frac{(i+2) \max_{E \in S_i(K)} \deg E}{\min_{E \in S_i(K)} w(E)}.$$
So our bound in (\ref{upp1}) improves the bound given by  Horak and Jost.
Duval and Reiner \cite{Duv} proved that
$$\lamax(L_i^{up}(K)) \le n,$$
where $n$ is the number of the vertices of $K$.
Our bound in (\ref{upp2}) will be better for large $n$ and small degrees of $i$-faces.
In particular, if $n \ge (i+2)\max_{E \in S_i(K)} \deg E ~(\ge \max_{F \in S_{i+1}(K)} \sum_{E \in \p F} \deg E)$, then our bound is better.
\end{rmk}

\section{Balancedness of incidence graphs of complexes under operations}

By Theorem \ref{main}, we know that the balancedness of the $i$-th incidence signed graph $B_i(K)$ of a complex $K$ plays an important role on the largest eigenvalue of the Laplacian of $K$.
In this section, we will study the balancedness of the incidence signed graphs of complexes
under some operations such as  wedge sum, join, Cartesian product and  duplication of motifs.

\subsection{Wedge}

\begin{defi}\cite{HJ13B}
Let $K_1$ and $K_2$ be complexes on disjoint vertex sets $V(K_1)$ and $V(K_2)$, respectively.
Let $F_1=\{v_0,\ldots, v_k\} \in S_k(K_1)$ and $F_2=\{u_0,\ldots, u_k\} \in S_k(K_2)$.
The combinational \emph{$k$-wedge sum} of $K_1$ and $K_2$ is a complex obtained from $K_1 \cup K_2$ by identifying  $F_1$ with $F_2$ such that $v_i$ is identified with $u_i$ for $i=0,1,\ldots,k$.
\end{defi}

The above definition generalizes in an obvious way to the $k$-wedge sum of arbitrary many complexes.
If $k \ge 1$, the $k$-wedge sum depends on the ways of identification of vertices. In fact, there exists a bijection $\phi: F_1 \to F_2$ such that $v \in F_1$ is identified with $\phi(v)$.

\begin{thm}\label{wedge}
Let $K=K_{1}\vee_k K_2$ be a $k$-wedge sum of complexes $K_1$ and $K_2$ by identifying a $k$-face $F_1$ of $K_1$ and a $k$-face $F_2$ of $K_2$.
If $i \ge k-1$, the $i$-th incidence graph $B_i(K)$ of $K$ is balanced if and only if $B_i(K_1)$ and $B_i(K_2)$ both are balanced.
\end{thm}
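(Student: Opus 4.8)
The plan is to realize $B_i(K)$ as the union of the two incidence graphs $B_i(K_1)$ and $B_i(K_2)$ glued along the incidence graph of the shared simplex, and then reduce balancedness of the union to balancedness of the pieces. First I would record the structural fact that, since $K=K_1\cup K_2$ with only the closed $k$-simplex $\bar{F}:=\overline{F_1}=\overline{F_2}$ identified, every face of $K$ (in particular every $i$- and $(i+1)$-face) lies in $K_1$ or in $K_2$, and two faces of the two pieces are identified only when both are subfaces of $\bar{F}$. Consequently $V(B_i(K))=V(B_i(K_1))\cup V(B_i(K_2))$ and every edge of $B_i(K)$ is an edge of $B_i(K_1)$ or of $B_i(K_2)$, the overlap being exactly $W:=B_i(\bar{F})$, the $i$-th incidence graph of the $k$-simplex $\bar{F}$. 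Here the hypothesis $i\ge k-1$ enters decisively, since it forces $W$ to be small: when $i=k-1$ the only shared $(i+1)$-face is $F_1$ and the shared $i$-faces are exactly its $k+1$ facets, so $W$ is the star with center $F_1$ and these facets as leaves; when $i=k$ the only shared face in these two dimensions is $F_1$, so $W$ is a single vertex; and when $i>k$ there are no shared faces, so $W$ is empty. In every case $W$ is connected or empty. I would fix one orientation of $K$, inducing orientations on $K_1$ and $K_2$ so that the signs of the edges of $B_i(K_j)$ agree with their signs in $B_i(K)$; since balancedness is orientation-independent (Lemma \ref{reverse}), this loses nothing.

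For the ``only if'' direction I would simply note that each $B_i(K_j)$ is a signed subgraph of $B_i(K)$ with matching edge signs, so every cycle of $B_i(K_j)$ is a cycle of $B_i(K)$; if $B_i(K)$ is balanced then all its cycles are positive, hence so are those of each $B_i(K_j)$, and both pieces are balanced. This direction uses no hypothesis on $i$.

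The converse is the substance of the proof, and the main obstacle is to reconcile the two certificates of balancedness on the overlap $W$. By the switching characterization of balancedness (Lemma \ref{balance}) each balanced $B_i(K_j)$ admits a vertex signing $\eta_j\colon V(B_i(K_j))\to\{-1,1\}$ with $\varsigma(uv)=\eta_j(u)\eta_j(v)$ on every edge; equivalently, a reorientation of $K_j$ turning all its incidence signs positive. Both $\eta_1$ and $\eta_2$ make every edge of $W$ positive, and because $W$ is connected the product $\eta_1\eta_2$ is constant on $V(W)$; replacing $\eta_2$ by $-\eta_2$ if necessary (still a valid signing for $B_i(K_2)$) I may assume $\eta_1=\eta_2$ on $V(W)$. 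Then $\eta:=\eta_1\cup\eta_2$ is a well-defined signing of $V(B_i(K))$, and since every edge of $B_i(K)$ lies in $B_i(K_1)$ or $B_i(K_2)$ it is made positive by $\eta$, so $B_i(K)$ is balanced; when $i>k$ the two graphs are disjoint and the claim is immediate. I expect the only delicate points to be the implication ``$i\ge k-1\Rightarrow W$ connected'' and the ``constant product on a connected piece'' step, so I would isolate these as two short sub-claims; everything else is bookkeeping about which faces of $K$ are shared.
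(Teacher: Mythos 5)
Your proposal is correct, and in the substantive case $i=k-1$ it takes a genuinely different route from the paper's. The paper argues cycle by cycle: given a cycle $C$ of $B_i(K)$ visiting both pieces, it notes that every transition between $B_i(K_1)$ and $B_i(K_2)$ must pass through the shared vertices $F$ or $G_j\in\partial F$, inserts detours $\ldots,G_j,F,G_j,\ldots$ (each inserted edge $\{G_j,F\}$ occurring twice, so the total sign is unchanged) to obtain a circuit $\tilde C$ that is an edge-union of cycles $C_1,\dots,C_{t-1}$ each lying wholly in one piece, and concludes $\varsigma(C)=\varsigma(\tilde C)=\prod_j\varsigma(C_j)=1$; the cases $i=k$ and $i>k$ are dispatched separately via a cut-vertex and a disjoint-union observation. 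You instead glue switching certificates: each balanced $B_i(K_j)$ admits a potential $\eta_j$ with $\varsigma(uv)=\eta_j(u)\eta_j(v)$ on every edge (exactly the switching characterization from Zaslavsky's lemma quoted in the paper), the overlap $W=B_i(\bar F)$ is a star, a single vertex, or empty according as $i=k-1$, $i=k$, or $i>k$, so $\eta_1\eta_2$ is constant on the connected overlap, and after a global flip of $\eta_2$ the potentials agree and extend to all of $B_i(K)$ because every edge of $B_i(K)$ lies in one of the two pieces. Both proofs rest on the same structural facts (an $(i+1)$-face of $K$ and its whole boundary lie in a single piece; the shared faces are precisely the subfaces of $F$), but your certificate-gluing treats the three cases uniformly, avoids the circuit bookkeeping, and generalizes verbatim to any union of two complexes whose intersection has connected $i$-th incidence graph, while the paper's argument stays elementary at the level of cycle signs. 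One correction of emphasis: the hypothesis $i\ge k-1$ is not what makes $W$ connected --- $B_i(\bar F)$ is connected for every $i\le k$ --- so your converse argument in fact goes through for all $i$; for $i\le k-2$ it is merely vacuous, since then $W$ itself is unbalanced by Lemma \ref{balance-simp}, so neither piece can be balanced, consistent with Remark \ref{rmk-wedge}.
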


\begin{proof}
Csae 1: $i>k$. By definition, $B_i(K)$ is only involved with $i$-faces and $(i+1)$-faces of $K_1$ and $K_2$.
So, in this case, $B_i(K)$ is a disjoint union of $B_i(K_1)$ and $B_i(K_2)$,
which implies the result immediately.

Case 2: $i=k$. In this case, $B_i(K)$ is a $0$-wedge sum of $B_i(K_1)$ and $B_i(K_2)$ by identifying the vertex $F_1$ of $B_i(K_1)$ and the vertex $F_2$ of $B_i(K_2)$ and forming a new vertex $F$ (or a new $i$-face of $K$).
Then $F$ is a cut vertex of $B_i(K)$.
So, any cycle of $B_i(K)$ is either a cycle of $B_i(K_1)$ or a cycle of $B_i(K_2)$ but not both.
The result follows in this case.

Case 3: $i=k-1$. As $B_i(K_1)$ and $B_i(K_2)$ are the subgraphs of $B_i(K)$,
they are both balanced if $B_i(K)$ is balanced.
Now suppose $B_i(K_1)$ and $B_i(K_2)$ both are balanced.
Let $C$ be a cycle of $B_i(K)$.
If $C$ lies in $B_i(K_1)$ or $B_i(K_2)$, surely it is positive.
Otherwise, $C$ contains both vertices of  $B_i(K_1)$ and $B_i(K_2)$.

Subcase 3.1: $C$ contains the face $F$ (the identification of $F_1$ with $F_2$).
We may list the vertices of $C$ as follows:
\begin{equation}\label{wedge-cycle}
C: F, G_1, P_{12}, G_2, P_{23}, G_3, \cdots, G_{t-1}, P_{t-1,t}, G_t, F,
\end{equation}
where $G_j \in \p F$ for $j \in [t]$, and $P_{j,j+1}$ is a path connecting $G_j$ and $G_{j+1}$ which lies in $K_1$ if $j$ is odd and lies in $K_2$ otherwise.
Now inserting some $F$'s and $G_j$'s in the above sequence, we get a circuit $\tilde{C}$:
\begin{equation}
\tilde{C}: F, G_1, P_{12}, G_2, \textcolor{blue}{F}, \textcolor{blue}{G_2}, P_{2,3}, G_3, \textcolor{blue}{F}, \textcolor{blue}{G_3} \cdots, G_{t-1}, \textcolor{blue}{F}, \textcolor{blue}{G_{t-1}}, P_{t-1,t}, G_t, F.
\end{equation}
Let $C_j:=F, G_j, P_{j,j+1}, G_{j+1}, F$, a cycle in $B_i(K_1)$ or $B_i(K_2)$ depending on whether $j$ is odd or even.
Then $\tilde{C}$ is a union of $C_j$ for $j \in [t-1]$.
As each cycle in $K_1$ or $K_2$ is positive, the sign of $C$ holds
$$ \varsigma(C)=\varsigma(\tilde{C})=\varsigma(C_1)\cdots \varsigma(C_{t-1})=1,$$
which implies the result.

Subcase 3.2:  $C$ contains no the face $F$.
The discussion is similar as Subcase 3.1.
The vertices of $C$ are listed as follows:
\begin{equation}
C: G_1, P_{12}, G_2, P_{2,3}, G_3, \cdots, G_{t-1}, P_{t-1,t}, G_t,
\end{equation}
where $G_t=G_1$, and $G_j$ and $P_{j,j+1}$ are defined as in (\ref{wedge-cycle}).
We get  a circuit $\tilde{C}$:
\begin{equation}
\tilde{C}: \textcolor{blue}{F}, G_1, P_{12}, G_2, \textcolor{blue}{F}, \textcolor{blue}{G_2}, P_{2,3}, G_3, \textcolor{blue}{F}, \textcolor{blue}{G_3}, \cdots, G_{t-1}, \textcolor{blue}{F}, \textcolor{blue}{G_{t-1}}, P_{t-1,t}, G_t, \textcolor{blue}{F}.
\end{equation}
Let $C_j:=F, G_j, P_{j,j+1}, G_{j+1}, F$, a cycle in $B_i(K_1)$ or $B_i(K_2)$ depending on whether $j$ is odd or even.
Then $\tilde{C}$ is a union of $C_j$ for $j \in [t-1]$, and
$$ \varsigma(C)=\varsigma(\tilde{C})=\varsigma(C_1)\cdots \varsigma(C_{t-1})=1,$$
which also implies the result.
\end{proof}

\begin{rmk}\label{rmk-wedge}
Let $K=K_{1}\vee_k K_2$ with the identified $k$-face $F$.
If $i \le k-2$, then $B_i(K)$ is always unbalanced.
 As $B_i(F)$ is a subgraph of $B_i(K)$, it suffices to show $B_i(F)$ is unbalanced.
As $B_i(F)$ is only related with $i$- and $(i+1)$-faces of $F$,
it is enough to show $B_i(F')$ is unbalanced for an $(i+2)$-face $F'$ of $F$,
which is proved in the following lemma.
\end{rmk}

\begin{lem}\label{balance-simp}
Let $F$ be a $k$-simplex for $k \ge 2$.
Then the incidence signed graph $B_{k-2}(F)$ is unbalanced.
\end{lem}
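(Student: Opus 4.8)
The plan is to produce a single \emph{negative} cycle in $B_{k-2}(F)$; since a signed graph possessing a negative cycle is by definition unbalanced, this is all that is required. Because reversing the orientation of a face only switches $B_{k-2}(F)$ (Lemma \ref{reverse}(1)) and switching preserves balancedness (Lemma \ref{balance}), I may fix the standard orientation $[F]=[v_0,v_1,\ldots,v_k]$ and compute every incidence sign relative to it.

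First I would record the local incidence structure. Write $\bar F_\ell := F\setminus\{v_\ell\}$ for the $(k-1)$-faces ($0\le \ell\le k$) and $G_{ij}:=F\setminus\{v_i,v_j\}$ for the $(k-2)$-faces ($i<j$), each oriented by the induced order of its vertices. A $(k-2)$-face $G_{ij}$ lies in $\p\bar F_\ell$ exactly when $\ell\in\{i,j\}$, so in $B_{k-2}(F)$ every $(k-2)$-face has degree $2$, joined precisely to $\bar F_i$ and $\bar F_j$. Tracking the position of the deleted vertex inside each $(k-1)$-face, I expect the boundary signs
$$\sgn([G_{ij}],\p[\bar F_j])=(-1)^i,\qquad \sgn([G_{ij}],\p[\bar F_i])=(-1)^{j-1},$$
the exponent $j-1$ arising because $v_j$ occupies position $j-1$ in $\bar F_i$ once $v_i$ has been removed.

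Next, since $k\ge 2$ there are at least three $(k-1)$-faces available, and I would single out $\bar F_0,\bar F_1,\bar F_2$ together with the three $(k-2)$-faces $G_{01},G_{12},G_{02}$ that join them pairwise. These six vertices form the cycle
$$C:\quad \bar F_0,\ G_{01},\ \bar F_1,\ G_{12},\ \bar F_2,\ G_{02},\ \bar F_0$$
of length $6$ in $B_{k-2}(F)$. Substituting the indices into the sign formulas above gives, in cyclic order, the six edge signs $1,\,1,\,-1,\,-1,\,1,\,-1$, whose product is $\varsigma(C)=-1$. Hence $C$ is a negative cycle and $B_{k-2}(F)$ is unbalanced.

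The only delicate step is the boundary-sign computation: one must keep careful account of how the positions of the surviving vertices shift when two vertices are deleted from $[F]$, so that the signs $(-1)^i$ and $(-1)^{j-1}$ come out correctly; everything else is then immediate. I note in passing that the same computation identifies the ``contracted'' signed graph governing $B_{k-2}(F)$ as the complete graph $K_{k+1}$ on $\{0,\ldots,k\}$ with edge $\{i,j\}$ carrying sign $(-1)^{i+j-1}$, the triangle $\{0,1,2\}$ being exactly the negative triangle corresponding to $C$; this reformulation also makes transparent why no choice of orientation can render $B_{k-2}(F)$ balanced.
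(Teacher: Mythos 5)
Your proof is correct, and it takes a genuinely different route from the paper's. The paper also exhibits an explicit negative cycle, but a long one: it threads through essentially all the $(k-1)$-faces ($F_{i_1},\dots,F_{i_k}$ when $k$ is odd, $F_{i_0},\dots,F_{i_k}$ when $k$ is even) linked by consecutive codimension-$2$ faces $F_{i_ji_{j+1}}$ and closed by $F_{i_1i_k}$ or $F_{i_0i_k}$, and it must split into two parity cases because the sign of the closing edge depends on whether $k$ is odd or even. You instead exhibit a single hexagon $\bar F_0, G_{01}, \bar F_1, G_{12}, \bar F_2, G_{02}, \bar F_0$ supported on just the three indices $\{0,1,2\}$ (available as soon as $k\ge 2$), governed by the one uniform sign formula $\sgn([G_{ij}],\p[\bar F_i])=(-1)^{j-1}$, $\sgn([G_{ij}],\p[\bar F_j])=(-1)^{i}$ for $i<j$; I have checked this formula (the deleted vertex $v_j$ sits in position $j-1$ of $[\bar F_i]$, and $v_i$ in position $i$ of $[\bar F_j]$, with both deletions inducing the same orientation on $G_{ij}$), and the resulting edge signs $1,1,-1,-1,1,-1$ do multiply to $-1$. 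What your approach buys is the elimination of the parity case analysis and a purely local witness of unbalance; your closing observation that contracting the degree-$2$ vertices $G_{ij}$ yields the signed complete graph $K_{k+1}$ with edge signs $(-1)^{i+j-1}$ is a nice bonus that packages the whole sign structure at once (your hexagon being the negative triangle $\{0,1,2\}$ there). You are also slightly more careful than the paper in one respect: you explicitly invoke Lemma \ref{reverse}(1) and Lemma \ref{balance} to justify fixing the standard orientation, whereas the paper does this implicitly by simply choosing the ordering $i_0<\cdots<i_k$; both are legitimate since balancedness is switching-invariant.
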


\begin{proof}
%So we only need to deal with $B_{k-2}(F)$ for a $k$-face $F$.
Let $F=\{i_0,\ldots,i_k\}$, and let $F_{i_j}=F \backslash \{i_j\}$, $F_{i_ji_l}=F \backslash \{i_j,i_l\}$ for $j,l =0,1,\ldots,k$ and $j \ne l$.
We order the vertices of $F$ as $i_0<i_1<\cdots<i_k$, and all faces of $F$ are oriented according the ordering of vertices.

If $k$ is odd, then $B_{k-2}(F)$ contains the following cycle $C$ listed in Fig. \ref{wedge1}, with the signs:
$$ s([F_{i_ji_{j+1}}],[F_{i_j}])=s([F_{i_ji_{j+1}}],[F_{i_{j+1}}])=(-1)^j, j \in [k-1],$$
and
$$ s([F_{i_1i_{k}}],[F_{i_k}])=-1, s([F_{i_1i_{k}}],[F_{i_1}])=(-1)^{k-1}=1.$$
So, the sign of $C$ is negative, implying that $B_{k-2}(F)$ is unbalanced.

\begin{figure}[h]
	\centering
			\begin{tikzpicture}
				\tikzstyle{vertex}=[circle,fill=black,minimum size=3pt];
				
				\node[vertex] (v1) at (-6,1) {};
				\node[vertex] (v2) at (-4,1) {};
				\node[vertex] (v3) at (-2,1) {};
                \node[vertex] (v4) at (1,1) {};
				\node[vertex] (v5) at (3,1) {};
				\node[vertex] (v6) at (-6,2.6) {};
				\node[vertex] (v7) at (-4,2.6) {};
				\node[vertex] (v8) at (-2,2.6) {};
                \node[vertex] (v9) at (1,2.6) {};
				\node[vertex] (v10) at (3,2.6) {};
				
				\node at (-6,0.5){$[F_{i_1i_2}]$};
                \node at (-4,0.5){$[F_{i_2i_3}]$};
                \node at (-2,0.5){$[F_{i_3i_4}]$};
                \node at (-0.5,1){$\cdots$};
                \node at (1,0.5){$[F_{i_{k-1}i_k}]$};
				\node at (3,0.5){$[F_{i_1i_k}]$};
                \node at (-6,3.1){$[F_{i_1}]$};
                \node at (-4,3.1){$[F_{i_2}]$};
                \node at (-2,3.1){$[F_{i_3}]$};
                \node at (-0.5,2.6){$\cdots$};
                \node at (1,3.1){$[F_{i_{k-1}}]$};
				\node at (3,3.1){$[F_{i_k}]$};

				\draw (v1) -- (v6);
                \draw (v1) -- (v7);
                \draw (v2) -- (v7);
                \draw (v2) -- (v8);
                \draw (v3) -- (v8);
				\draw (v3) -- (v8);
                \draw (v4) -- (v9);
				\draw (v4) -- (v10);
                \draw (v5) -- (v10);
                \draw (v5) -- (v6);
			\end{tikzpicture}
			\caption{A cycle $C$ of $B_{k-2}(F)$ for $k$ being odd}\label{wedge1}
	\end{figure}
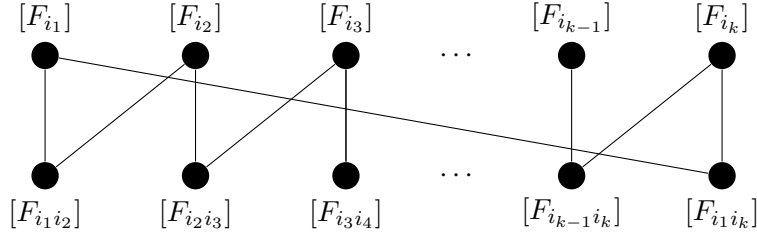

If $k$ is even, then $B_{k-2}(F)$ contains the following cycle $C$ listed in Fig. \ref{wedge2}, with the signs:
$$ s([F_{i_ji_{j+1}}],[F_{i_j}])=s([F_{i_ji_{j+1}}],[F_{i_{j+1}}])=(-1)^j, j =0,1, \ldots,k-1$$
and
$$ s([F_{i_0i_{k}}],[F_{i_k}])=1, s([F_{i_0i_{k}}],[F_{i_0}])=(-1)^{k-1}=-1.$$
So, the sign of $C$ is negative, implying that $B_{k-2}(F)$ is also unbalanced.

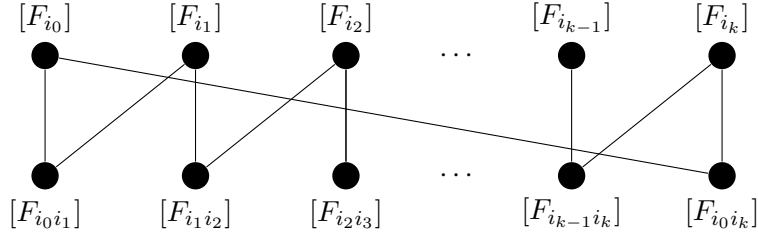
\begin{figure}[h]
	\centering
			\begin{tikzpicture}
				\tikzstyle{vertex}=[circle,fill=black,minimum size=3pt];
				
				\node[vertex] (v1) at (-6,1) {};
				\node[vertex] (v2) at (-4,1) {};
				\node[vertex] (v3) at (-2,1) {};
                \node[vertex] (v4) at (1,1) {};
				\node[vertex] (v5) at (3,1) {};
				\node[vertex] (v6) at (-6,2.6) {};
				\node[vertex] (v7) at (-4,2.6) {};
				\node[vertex] (v8) at (-2,2.6) {};
                \node[vertex] (v9) at (1,2.6) {};
				\node[vertex] (v10) at (3,2.6) {};
				
				\node at (-6,0.5){$[F_{i_0i_1}]$};
                \node at (-4,0.5){$[F_{i_1i_2}]$};
                \node at (-2,0.5){$[F_{i_2i_3}]$};
                \node at (-0.5,1){$\cdots$};
                \node at (1,0.5){$[F_{i_{k-1}i_k}]$};
				\node at (3,0.5){$[F_{i_0i_k}]$};
                \node at (-6,3.1){$[F_{i_0}]$};
                \node at (-4,3.1){$[F_{i_1}]$};
                \node at (-2,3.1){$[F_{i_2}]$};
                \node at (-0.5,2.6){$\cdots$};
                \node at (1,3.1){$[F_{i_{k-1}}]$};
				\node at (3,3.1){$[F_{i_k}]$};

				\draw (v1) -- (v6);
                \draw (v1) -- (v7);
                \draw (v2) -- (v7);
                \draw (v2) -- (v8);
                \draw (v3) -- (v8);
				\draw (v3) -- (v8);
                \draw (v4) -- (v9);
				\draw (v4) -- (v10);
                \draw (v5) -- (v10);
                \draw (v5) -- (v6);
			\end{tikzpicture}
			\caption{A cycle $C$ of $B_{k-2}(F)$ for $k$ being even}\label{wedge2}
	\end{figure}
\end{proof}

\begin{cor}\label{infi-wedge}
For each integer $i \ge 0$, there exist infinitely many $(i+1)$-path connected acyclic complexes $K$ with $B_i(K)$ being balanced.
\end{cor}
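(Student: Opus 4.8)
The plan is to produce an explicit infinite family by iterating the wedge construction on the simplest possible balanced block. I would take as building block the full $(i+1)$-simplex $\Delta$ on $i+2$ vertices. Its incidence graph $B_i(\Delta)$ consists of the single vertex associated to the unique $(i+1)$-face together with the $i+2$ vertices associated to its boundary $i$-faces, with an edge from the $(i+1)$-face to each of them; thus $B_i(\Delta)$ is a star, hence a tree, and is balanced because it contains no cycle. Furthermore $\Delta$ is contractible, hence acyclic, and it is $(i+1)$-path connected since it has a single $(i+1)$-face.

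For each $n \ge 1$ I would then form $K_n$ as the iterated $i$-wedge sum of $n$ disjoint copies $\Delta^{(1)}, \dots, \Delta^{(n)}$ of $\Delta$, gluing $\Delta^{(j)}$ to $\Delta^{(j+1)}$ along an $i$-face and choosing, at each interior copy, two distinct $i$-faces for its two gluings (possible because $\Delta$ has $i+2 \ge 2$ distinct $i$-faces). Taking $k=i$, the hypothesis $i \ge k-1$ of Theorem \ref{wedge} is satisfied, so writing $K_n = K_{n-1} \vee_i \Delta^{(n)}$ and arguing by induction on $n$, Theorem \ref{wedge} gives that $B_i(K_n)$ is balanced if and only if $B_i(K_{n-1})$ and $B_i(\Delta^{(n)})$ are both balanced; since the base case $B_i(\Delta)$ is balanced, so is every $B_i(K_n)$. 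For $(i+1)$-path connectedness, the unique $(i+1)$-faces $\bar F_1, \dots, \bar F_n$ of the successive copies satisfy that $\bar F_j \cap \bar F_{j+1}$ is exactly the identified $i$-face, so $\bar F_1 < \cdots < \bar F_n$ is an $(i+1)$-path meeting every $(i+1)$-face; equivalently $B_i(K_n)$ is connected.

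The step that requires genuine care---and the main obstacle---is acyclicity, since Theorem \ref{wedge} controls only $B_i$ and says nothing about homology. Here I would argue homologically: $K_n$ is the union of the subcomplexes $K_{n-1}$ and $\Delta^{(n)}$ whose intersection is the single identified $i$-face, itself a contractible simplex, so the Mayer--Vietoris sequence with vanishing reduced homology of the intersection yields $\tilde H_\ast(K_n) \cong \tilde H_\ast(K_{n-1}) \oplus \tilde H_\ast(\Delta^{(n)})$; as $K_1 = \Delta$ and each $\Delta^{(n)}$ are acyclic, induction shows every $K_n$ is acyclic (in fact contractible). It is precisely this homological bookkeeping that must be handled carefully: one must glue along an $i$-face rather than a lower-dimensional one---by Remark \ref{rmk-wedge} a gluing along a face of dimension $\le i-2$ would already make $B_i$ unbalanced---and the contractibility of the intersection is what prevents the Mayer--Vietoris connecting maps from introducing homology. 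Finally, $K_n$ has exactly $n$ facets, so the $K_n$ are pairwise non-isomorphic, producing infinitely many $(i+1)$-path connected acyclic complexes with $B_i(K_n)$ balanced.
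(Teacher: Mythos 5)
Your proposal is correct and takes essentially the same route as the paper: both build the family by iterating the $i$-wedge sum of $(i+1)$-simplices, deduce balancedness from the $0$-wedge structure of the incidence graphs (you via Theorem \ref{wedge} with $k=i$; the paper directly via Case 2 of that theorem's proof, noting $B_i$ stays a tree), and establish acyclicity by the identical Mayer--Vietoris argument over the contractible identified $i$-face. Your extra touches (the explicit facet path for $(i+1)$-path connectedness and counting facets to get pairwise non-isomorphic complexes) are correct refinements of details the paper leaves implicit.
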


\begin{proof}
For each integer $i \ge 0$, let $\sigma$ be an $(i+1)$-simplex.
Let $K_0=\sigma$, and $K_p=K_{p-1} \vee_i K_0$ for $p \ge 1$.
We will get a sequence of complexes $\{K_i\}_{i \ge 0}$.
From the Case 2 in the proof of Theorem \ref{wedge},
$B_i(K_p)$ is a $0$-wedge sum of $B_i(K_{p-1})$ and $B_i(K_0)$.
It is known that $B_i(K_0)$ is connected and hence $B_i(K_p)$ is connected by induction, which implies that $K_p$ is $(i+1)$-path connected.
Also, $B_i(K_0)$ contains no cycles and hence $B_i(K_p)$ contains no cycles by induction, which implies that $B_i(K_p)$ is balanced.
Finally noting that $K_{p-1} \cap K_0$ is an $i$-simplex which is acyclic,
by Mayer-Vietoris sequence (see \cite{Munk}) we have
$\tilde{H}_q(K_p) \cong \tilde{H}_q(K_{p-1}) \oplus \tilde{H}_q(K_0)$.
As $K_0$ is acyclic, we get $K_p$ is acyclic by induction.
\end{proof}

In Corollary \ref{infi-wedge}, for $i=0$, taking $\s$ be an edge, we will get a sequence of trees; for $i=1$, taking $\s$ be an triangle, we will get a sequence of acyclic planar $2$-complexes.

\subsection{Join}
\begin{defi}\cite{HJ13B}
Let $K_1$ and $K_2$ be complexes on disjoint vertex sets $V(K_1)$ and $V(K_2)$ respectively. The \emph{join} $K_1\ast K_2$ is a complex on the vertex set $V(K_1) \cup V(K_2)$, whose faces are $F_1\ast F_2:=F_1 \cup F_2$, where $F_1$ is a face in $K_1$ and $F_2$ is a face in $K_2$.
\end{defi}

\begin{thm}
Let $K=K_1\ast K_2$ be the join of complexes $K_1$ and $K_2$.
If $\dim K_1 + \dim K_2 \ge i+1$,
then the $i$-th incidence graph $B_i(K)$ of $K$ is unbalanced.
%one of $K_1$ and $K_2$ contains at least one $(i+1)$-face, then the $i$-th incidence graph $B_i(K)$ of $K$ is unbalanced.
\end{thm}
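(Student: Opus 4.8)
The plan is to show that under the hypothesis $\dim K_1 + \dim K_2 \ge i+1$, the incidence graph $B_i(K)$ contains a small unbalanced subcycle, and since a signed graph is balanced only if every cycle (hence every subgraph) is balanced, this forces $B_i(K)$ itself to be unbalanced. The key structural observation is that a face of the join $K = K_1 \ast K_2$ has the form $F = F_1 \cup F_2$ with $F_1 \in K_1$, $F_2 \in K_2$, and since the vertex sets are disjoint, $\dim F = \dim F_1 + \dim F_2 + 1$. So the hypothesis $\dim K_1 + \dim K_2 \ge i+1$ guarantees the existence of a genuine $(i+1)$-face $F = F_1 \ast F_2$ of $K$ that splits nontrivially across both factors.

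First I would fix such an $(i+1)$-face $F = F_1 \ast F_2$ and examine its boundary structure in $B_i(K)$. The crucial point is the combinatorial geometry of the boundary of a join: the $i$-faces in $\p F$ are obtained by deleting one vertex of $F$, and each such deletion removes a vertex either from $F_1$ or from $F_2$. Writing $E_j^{(1)} = F \setminus \{v_j\}$ for a vertex $v_j \in F_1$ and $E_\ell^{(2)} = F \setminus \{u_\ell\}$ for a vertex $u_\ell \in F_2$, I would look for two $i$-faces $E, E'$ in $\p F$ together with a common $(i+1)$-face $\bar{F} \ne F$ (or a short path through another face) forming a $4$-cycle in $B_i(K)$ whose sign I can compute explicitly.

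The cleanest route, I expect, is to connect this statement to the already-proved \textbf{Lemma \ref{balance-simp}}. Since $\dim F = i+1$ and $F$ splits across both factors, I would try to exhibit an $(i+2)$-face or an appropriate simplex inside a join-neighborhood of $F$ whose incidence graph $B_i$ restricts to an unbalanced cycle of the type in Figures \ref{wedge1}--\ref{wedge2}; concretely, the sign of such a boundary cycle is governed purely by the alternating-sign pattern $\sgn([E],\p[\bar F]) = (-1)^{\text{position}}$, exactly as in the simplex computation, and the join structure does not alter these local signs. So I would aim to \emph{reduce} the problem: the hypothesis produces a simplex (a facet of $K$ of dimension at least $i+1$ that meets both $K_1$ and $K_2$), and on that simplex $B_{i}$ contains a sub-configuration matching the unbalanced cycle already constructed in Lemma \ref{balance-simp}, giving $\varsigma(C) = -1$.

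\textbf{The main obstacle} will be handling the boundary condition $\dim K_1 + \dim K_2 \ge i+1$ with \emph{equality} carefully: when this is tight, $F = F_1 \ast F_2$ may be a facet, so there is no ambient $(i+2)$-face to route a cycle through two distinct $(i+1)$-faces sharing $i$-faces in $\p F$. In that edge case I would instead need to use two \emph{different} $(i+1)$-faces obtained by swapping a vertex in one factor (possible because at least one factor has a spare vertex, or because one can slide within the join), constructing a $4$-cycle $E,\,F,\,E',\,F',\,E$ directly and verifying its sign is $-1$ via the same alternating-sign bookkeeping as in the proofs of Lemma \ref{reverse}(1) and Lemma \ref{balance-simp}. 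Verifying that such a second face $F'$ always exists under the stated dimension hypothesis, and that the resulting cycle is genuinely a cycle (the two $i$-faces are distinct and both lie in $\p F \cap \p F'$), is the delicate bookkeeping step I would spend the most care on.
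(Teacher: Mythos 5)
Your main route is exactly the paper's proof, but you stop one step short of closing it, and the place where you stop contains a genuine error. The paper's entire argument is two lines: take top-dimensional faces $F_1\in K_1$ and $F_2\in K_2$; since the vertex sets are disjoint, $F_1\ast F_2$ is a face of $K$ of dimension $\dim K_1+\dim K_2+1\ge i+2$, so $K$ contains an $(i+2)$-face $F$ (closure under inclusion); Lemma \ref{balance-simp} with $k=i+2$ says $B_i(F)$ is unbalanced, and since $B_i(F)$ is a subgraph of $B_i(K)$ and every cycle of a subgraph is a cycle of the ambient graph, $B_i(K)$ is unbalanced. You correctly identify this reduction (``exhibit an $(i+2)$-face \dots matching the unbalanced cycle already constructed in Lemma \ref{balance-simp}''), but you extract only an $(i+1)$-face from the hypothesis, which underuses it: the dimension count on the join of \emph{top-dimensional} faces shows that the hypothesis $\dim K_1+\dim K_2\ge i+1$ \emph{always} yields an $(i+2)$-face, even when the inequality is tight. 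Your ``main obstacle'' --- the worry that in the tight case $F$ is a facet with no ambient $(i+2)$-face --- is therefore vacuous; the case you plan to spend the most care on does not exist.

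Worse, the fallback you propose for that (nonexistent) case would fail outright: there are no $4$-cycles $E,F,E',F',E$ in $B_i(K)$ for \emph{any} complex $K$. If two distinct $i$-faces $E\ne E'$ both lie in $\partial F$ for an $(i+1)$-face $F$, then $|E\cup E'|=2(i+1)-|E\cap E'|\ge i+2=|F|$, so $E\cup E'=F$; if $E,E'$ also both lie in $\partial F'$, then likewise $E\cup E'=F'$, forcing $F=F'$. Thus two distinct $(i+1)$-faces share at most one common $i$-face, the girth of any incidence graph $B_i$ is at least $6$, and no amount of sign bookkeeping can rescue a cycle that cannot exist --- note that the paper's Lemma \ref{balance-simp} accordingly builds cycles of length $2k$ through the codimension-two faces $F_{i_ji_l}$, not $4$-cycles. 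So had the tight case been real, your sketch would have had no valid completion; as it stands, the missing step is the one-line dimension count above, after which your argument coincides with the paper's.
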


\begin{proof}
By definition, $K$ has an $(i+2)$-face $F$.
By Lemma \ref{balance-simp}, $B_i(F)$ is unbalanced so that $B_i(K)$ is unbalanced.
\end{proof}

\begin{rmk}
Let $K=K_1\ast K_2$ be the join of complexes $K_1$ and $K_2$.
If $\dim K_1 + \dim K_2 \le i$, then
neither $K_1$ nor $K_2$ contains an $(i+1)$-face, then $B_i(K_1)$ and $B_i(K_2)$ are both empty graphs (without edges).
In particular, if $\dim K_1 + \dim K_2 < i$, then $K$ contains no $(i+1)$-faces, implying that $B_i(K)$ is also empty.
If $\dim K_1 + \dim K_2 = i$, then $B_i(K)$ contains $(i+1)$-faces.
We will not leave much space for a discussion of the balancedness of $B_i(K)$  in this case.
But it is worth to mention that if $K_1,K_2$ are both $0$-dimensional complexes (empty graphs), then $K$ is a complete bipartite graph for which the incidence graph $B_0(K)$ is balanced.
\end{rmk}

%\textcolor{red}{What is other cases if $K_1$ and $K_2$ both contain no faces of dimension greater than $i$?}

\subsection{Cartesian product}
\begin{defi}
Let $K_1$ and $K_2$ be complexes with vertex sets $V(K_1)$ and $V(K_2)$ respectively.
The \emph{Cartesian product} $K_1\Box  K_2$ of $K_1$ and $K_2$ is a complex with  vertex set $V(K_1)\times V(K_2)$, whose faces are $F\times v:=F\times \{v\}=\{(u_0,v),\ldots,(u_s,v)\}$ if  $F=\{u_0,\ldots,u_s\}\in K_1$ and $v\in V(K_2)$, or $u\times F':=\{u\}\times F'=\{(u,v_0),\ldots,(u,v_t)\}$ if $u\in V(K_1)$ and $F'=\{v_0,\ldots,v_t\}\in K_2$.
\end{defi}

An orientation of $F$ in $K_1$ induces an orientation of $F\times v$ in $K_1 \Box K_2$.
If $[F]=[u_0,\ldots,u_s]\in K_1$, then
$[F\times v]:=[(u_0,v),\ldots,(u_s,v)]$.
So
$\sgn([F\times v], \partial[\bar{F}\times v])=\sgn([F],\partial[\bar{F}]).$
Similarly, an orientation of $F'$ in $K_2$ induces an orientation of $u\times F'$.
Given weight functions $w_1$ on $K_1$ and $w_2$ on $K_2$.
We define the weight $w$ on $K_1\Box K_2$ such that for any $F \in K_1$ and $F' \in K_2$,
\begin{equation}\label{Cart-weight}
w(F\times v):=w_1(F), ~ w(u\times F'):=w_2(F').
\end{equation}
This implies a condition on $w_1$ and $w_2$, namely, $w(u,v)=w_1(u)=w_2(v)$ for any vertex $u$ of $K_1$ and $v$ of $K_2$.

Note that the following Lemma \ref{laplace} for the (combinatorial) Laplacian matrix of a graph was observed by Fiedler \cite[Item 3.4]{Fied}.
Here we give a proof for general weight function, and use $\text{Spec} A$ to denote the spectrum of a square matrix $A$.

\begin{lem}\label{laplace}
Let $K_1\Box K_2$ be the Cartesian product of complexes $K_1$ and $K_2$ with weight defined in (\ref{Cart-weight}). Then
\begin{equation}\label{Cart1}\L_0^{up}(K_1\Box K_2)=\L_0^{up}(K_1)\otimes I_{V(K_2)}+I_{V(K_1)}\otimes \L_0^{up}(K_2),
\end{equation}
and
\begin{equation}\label{spec}
\text{Spec}\L_0^{up}(K_1\Box K_2)=\{\lambda+\mu: \lambda\in \text{Spec}\L_0^{up}(K_1),\mu\in \text{Spec}\L_0^{up}(K_2)\},
\end{equation}
where $I_{V(K_1)},I_{V(K_2)}$ are the identity matrices defined on $V(K_1),V(K_2)$ respectively.
\end{lem}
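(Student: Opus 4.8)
The plan is to reduce the statement to the classical Kronecker-sum identity for weighted graph Laplacians, exploiting the fact that the $0$-up Laplacian only sees the $1$-skeleton. First I would pin down the relevant faces of $K_1 \Box K_2$: its $0$-faces are exactly the pairs $(u,v)$ with $u \in V(K_1)$ and $v \in V(K_2)$, while its $1$-faces come in two types, namely $\{(u_0,v),(u_1,v)\}$ from an edge of $K_1$ together with a vertex of $K_2$, and $\{(u,v_0),(u,v_1)\}$ from a vertex of $K_1$ together with an edge of $K_2$. Thus the $1$-skeleton of $K_1 \Box K_2$ is precisely the graph Cartesian product of $K_1^{(1)}$ and $K_2^{(1)}$, and I would identify $C^0(K_1 \Box K_2)$ with $C^0(K_1) \otimes C^0(K_2)$ by sending the elementary cochain $[(u,v)]^*$ to $[u]^* \otimes [v]^*$, so that the matrix index $(u,v)$ corresponds to the tensor index.

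The core step is the entrywise verification of (\ref{Cart1}). Using the matrix form $\L_0^{up} = W_0^{-1} D_0^\top W_1 D_0$ from (\ref{DefL}), a weighted graph Laplacian has diagonal entry $\deg(x)/w(x)$ at a vertex $x$ and off-diagonal entry $-w(\{x,y\})/w(x)$ on an edge $\{x,y\}$, the orientation signs of the two endpoints of an edge always multiplying to $-1$ and hence playing no role. For a vertex $(u,v)$ of $K_1 \Box K_2$, the incident edges split into those fixing the $K_2$-coordinate and those fixing the $K_1$-coordinate, so $\deg(u,v) = \deg_{K_1} u + \deg_{K_2} v$; dividing by $w(u,v)$ and invoking the compatibility condition $w(u,v) = w_1(u) = w_2(v)$ from (\ref{Cart-weight}) yields $\deg_{K_1} u / w_1(u) + \deg_{K_2} v / w_2(v)$, which is exactly the diagonal entry of the Kronecker sum. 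For the off-diagonal entries, an edge of $K_1 \Box K_2$ either fixes the $K_2$-coordinate, contributing $(\L_0^{up}(K_1))_{u,u'}$, or fixes the $K_1$-coordinate, contributing $(\L_0^{up}(K_2))_{v,v'}$. Comparing the four cases $u=u',v=v'$; $u\ne u',v=v'$; $u=u',v\ne v'$; $u\ne u',v\ne v'$ against the blocks of $\L_0^{up}(K_1)\otimes I + I\otimes \L_0^{up}(K_2)$ then establishes (\ref{Cart1}).

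For the spectral statement (\ref{spec}) I would invoke the general fact about Kronecker sums: if $\L_0^{up}(K_1)x=\lambda x$ and $\L_0^{up}(K_2)y=\mu y$, then $(\L_0^{up}(K_1)\otimes I + I\otimes \L_0^{up}(K_2))(x\otimes y)=(\lambda+\mu)(x\otimes y)$. Since each $\L_0^{up}(K_j)$ is self-adjoint with respect to its weighted inner product $(\cdot,\cdot)_{C^0}$, it is diagonalizable with a real eigenbasis; the tensor products of these two eigenbases form a basis of $C^0(K_1\Box K_2)$ of the correct dimension $|V(K_1)|\cdot|V(K_2)|$ consisting of eigenvectors, so the listed sums $\lambda+\mu$, counted with multiplicity, exhaust the spectrum of $\L_0^{up}(K_1\Box K_2)$.

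I expect the only delicate point to be bookkeeping rather than genuine difficulty. One must confirm that the vertex-weight compatibility condition carried by (\ref{Cart-weight}) is exactly what allows the diagonal term $\deg(u,v)/w(u,v)$ to separate additively, and one must check that no off-diagonal entry is double-counted, i.e. that an edge of $K_1\Box K_2$ never fixes both coordinates at once, so that the $u\ne u',v\ne v'$ block is genuinely zero. The orientation signs need no special handling here because for $i=0$ the two endpoint signs of any edge cancel in the product; this is precisely what keeps the identity free of the sign obstructions that would arise for higher $i$.
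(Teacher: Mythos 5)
Your proposal is correct and follows essentially the same route as the paper: an entrywise verification of the Kronecker-sum identity (\ref{Cart1}), splitting the diagonal contribution at $(u,v)$ via the weight compatibility $w(u,v)=w_1(u)=w_2(v)$ and the incident $1$-faces into the two coordinate types, handling the off-diagonal cases including the vanishing $u\ne u'$, $v\ne v'$ block, and then deducing (\ref{spec}) by checking that $f\otimes g$ is an eigenvector with eigenvalue $\lambda+\mu$ and that self-adjointness yields complete eigenbases whose tensor products span $C^0(K_1\Box K_2)$. Your observation that the two endpoint signs of any oriented edge multiply to $-1$, making the $i=0$ case orientation-free, is a clean way of packaging the sign bookkeeping the paper carries out explicitly.
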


\begin{proof}
First we discuss the Eq. (\ref{Cart1}).
The diagonal entry of
$\L_0^{up}(K_1\Box K_2)$ corresponding to a vertex $(u,v)$ is:
\begin{align*}
\L_0^{up}(K_1\times K_2)_{(u,v),(u,v)}&=\sum\limits_{\bar{F}\in S_1(K_1\times K_2): (u,v)\in \partial \bar{F}}\frac{w(\bar{F})}{w(u,v)}\\
&=\sum\limits_{F\in S_1(K_1): (u,v)\in F\times v}\frac{w(F\times v)}{w(u,v)}+\sum\limits_{F'\in S_1(K_2): (u,v)\in u\times F'}\frac{w(u\times F')}{w(u,v)}\\
&=\sum\limits_{F\in S_1(K_1): u\in \partial F}\frac{w_1(F)}{w_1(u)}+\sum\limits_{F'\in S_1(K_2): v\in \partial F'}\frac{w_2(F')}{w_2(v)}\\
&=(L_0^{up}(K_1)\otimes I_{V(K_2)})_{(u,v),(u,v)}+(I_{V(K_1)}\otimes L_0^{up}(K_2))_{(u,v),(u,v)}.
\end{align*}

Next we consider the off-diagonal entries of $\L_0^{up}(K_1\Box K_2)$. Take two vertices $(u_1,v_1)$ and $(u_2,v_2)$ of $K_1\Box K_2$.
 If $u_1\neq u_2$ and $v_1\neq v_2$, then
  $\L_0^{up}(K_1\times K_2)_{(u_1,v_1),(u_2,v_2)}=0$ as no $1$-faces $\bar{F}$ of $K_1\Box K_2$ contain both $(u_1,v_1)$ and $(u_2,v_2)$.
  Also the $((u_1,v_1),(u_2,v_2))$-entry of right side of Eq. (\ref{Cart1}) is zero as $(I_{V(K_2)})_{v_1,v_2}=0$ and $(I_{V(K_1)})_{u_1,u_2}=0$.
So it suffices to consider the cases: (1) $u_1=u_2:=u, v_1\neq v_2$, (2) $u_1\neq u_2, v_1=v_2:=v$.

For the case (1), we have
\begin{align*}
\L_0^{up}(K_1\Box K_2)_{(u,v_1),(u,v_2)}&=\sum_{F' \in S_1(K_2): \atop v_1,v_2 \in \p F'}\frac{w(u \times F')}{w(u,v_1)}\sgn ((u,v_1),\partial[u\times F']) \sgn ((u,v_2),\partial[u\times F'])\\
&=\sum_{F' \in S_1(K_2): \atop v_1,v_2 \in \p F'}\frac{w_2(F')}{w_2(v_1)}\sgn (v_1,\partial[F'])\cdot \sgn (v_2,\partial[F'])\\
&=\L_0^{up}(K_2)_{v_1,v_2}\\
&=\left(L_0^{up}(K_1)\otimes I_{V(K_2)}+I_{V(K_1)}\otimes L_0^{up}(K_2)\right)_{(u,v_1),(u,v_2)}.
\end{align*}
Similarly, for the case (2), we have
\begin{align*}
\L_0^{up}(K_1\Box K_2)_{(u_1,v),(u_2,v)}&=\sum_{F \in S_1(K_1): \atop u_1,u_2 \in \p F}\frac{w(F \times v)}{w(u_1,v)}
\sgn ((u_1,v),\partial[F\times v])\cdot \sgn ((u_2,v),\partial[F\times v])\\
&=\sum_{F \in S_1(K_1): \atop u_1,u_2 \in \p F}\frac{w_1(F)}{w_1(u_1)}\sgn (u_1,\partial[F])\cdot \sgn (u_2,\partial[F])\\
&=\L_0^{up}(K_1)_{u_1,u_2}\\
&=\left(L_0^{up}(K_1)\otimes I_{V(K_2)}+I_{V(K_1)}\otimes L_0^{up}(K_2)\right)_{(u_1,v),(u_2,v)}.
\end{align*}
So we proved the Eq. (\ref{Cart1}).

Let $f$ be an eigenvector of $\L_0^{up}(K_1)$ associated with an eigenvalue $\la$, and $g$ be an eigenvector of $\L_0^{up}(K_2)$ associated with an eigenvalue $\mu$.
Then $\L_0^{up}(K_1)f=\lambda f$ and $\L_0^{up}(K_2)g=\mu g$, and by Eq. (\ref{Cart1})
\begin{align*}
\L_0^{up}(K_1\Box K_2)(f\otimes g)&=(\L_0^{up}(K_1)\otimes I_{V(K_2)})(f\otimes g)+(I_{V(K_1)}\otimes \L_0^{up}(K_2))(f\otimes g)\\
&=(\L_0^{up}(K_1)f)\otimes(I_{V(K_2)}g)+(I_{V(K_1)}f)\otimes(\L_0^{up}(K_2)g)\\
&=(\lambda+\mu)(f\otimes g).
\end{align*}
So, $f\otimes g$ is an eigenvector of $\L_0^{up}(K_1\Box K_2)$ associated with the eigenvalue $\la+\mu$.
As $\L_0^{up}(K_1)$ (respectively, $\L_0^{up}(K_2)$) is self-adjoint, it has a set of orthonomal eigenvectors consisting of the basis of $\R^{V(K_1)}$ (respectively, $\R^{V(K_2)}$).
So, $\L_0^{up}(K_1\Box K_2)$ has a set of orthonomal eigenvectors consisting of the basis of $\R^{V(K_1)\times V(K_2)}$.
The Eq. (\ref{spec}) follows.
\end{proof}

\begin{rmk}\label{signless}
By a similar proof as in Lemma \ref{laplace}, we have the following result:
\begin{equation}\label{Cart1-sl}\Q_0^{up}(K_1\Box K_2)=\Q_0^{up}(K_1)\otimes I_{V(K_2)}+I_{V(K_1)}\otimes \Q_0^{up}(K_2),
\end{equation}
and
\begin{equation}\label{spec-sl}
\text{Spec}\Q_0^{up}(K_1\Box K_2)=\{\lambda+\mu: \lambda\in \text{Spec}\Q_0^{up}(K_1),\mu\in \text{Spec}\Q_0^{up}(K_2)\}.
\end{equation}
\end{rmk}

\begin{lem}\label{Cart-i}
Let $K_1\Box K_2$ be the Cartesian product of complexes $K_1$ and $K_2$.
Then for $i \ge 1$, $B_i(K_1 \Box K_2)$ is the union of following disjoint graphs:
$|V(K_2)|$ copies of $B_i(K_1)$, and $|V(K_1)|$ copies of $B_i(K_2)$.
Consequently,
\begin{equation}\label{Cart2}
\L_i^{up}(K_1\Box K_2)=\L_i^{up}(K_1)\otimes I_{V(K_2)} \oplus I_{V(K_1)}\otimes \L_i^{up}(K_2),
\end{equation}
where $I_{V(K_1)},I_{V(K_2)}$ are the identity matrices defined on $V(K_1),V(K_2)$ respectively.
\end{lem}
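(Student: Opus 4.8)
The plan is to establish the combinatorial decomposition of $B_i(K_1 \Box K_2)$ first, and then to obtain the matrix identity (\ref{Cart2}) as a formal consequence, since $\L_i^{up}$ is determined entirely by the signed incidence matrix $D_i$ and the diagonal weight matrices through (\ref{DefL}).

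First I would classify the faces of dimension at least $1$. Each such face of $K_1 \Box K_2$ is either \emph{horizontal}, of the form $F \times v$ with $F \in K_1$ and $v \in V(K_2)$, or \emph{vertical}, of the form $u \times F'$ with $u \in V(K_1)$ and $F' \in K_2$; and for dimension $\ge 1$ these two families are disjoint, because a horizontal face has a single second coordinate but at least two distinct first coordinates, whereas a vertical face has exactly the opposite. This is precisely where the hypothesis $i \ge 1$ enters: when the dimension drops to $0$ the two types collapse onto the common vertex $(u,v)$, which is the reason the $i=0$ case in Lemma \ref{laplace} produces a tensor \emph{sum} rather than a direct sum.

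Next I would analyze the incidences. Deleting a vertex from a horizontal $(i+1)$-face $\bar{F} \times v$ yields $(\bar{F} \setminus \{u_l\}) \times v$, so $\p(\bar{F} \times v)$ consists only of horizontal $i$-faces $F \times v$ with $F \in \p \bar{F}$ and the \emph{same} second coordinate $v$; symmetrically $\p(u \times \bar{F}')$ consists only of vertical $i$-faces sharing the first coordinate $u$. Hence no incidence of $K_1 \Box K_2$ joins a horizontal face to a vertical one, nor joins faces whose fixed coordinate differs. Fixing $v \in V(K_2)$ and restricting to the horizontal faces with second coordinate $v$ therefore yields a subgraph isomorphic to $B_i(K_1)$, and by the sign identity $\sgn([F \times v], \p[\bar{F} \times v]) = \sgn([F], \p[\bar{F}])$ recorded when the Cartesian-product orientation was introduced, the isomorphism preserves edge signs. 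Letting $v$ range over $V(K_2)$ and $u$ over $V(K_1)$ exhibits $B_i(K_1 \Box K_2)$ as the pairwise vertex-disjoint union of $|V(K_2)|$ copies of $B_i(K_1)$ and $|V(K_1)|$ copies of $B_i(K_2)$, as claimed.

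Finally I would read off (\ref{Cart2}). Because (\ref{DefL}) expresses $\L_i^{up}$ through $D_i$ and the diagonal matrices $W_i, W_{i+1}$, and the weight convention (\ref{Cart-weight}) gives $w(F \times v) = w_1(F)$ and $w(u \times F') = w_2(F')$, the block of $\L_i^{up}(K_1 \Box K_2)$ supported on the horizontal faces with a fixed $v$ reproduces $\L_i^{up}(K_1)$ verbatim, and likewise each vertical block reproduces $\L_i^{up}(K_2)$. Indexing the horizontal $i$-faces by $(F,v) \in S_i(K_1) \times V(K_2)$ identifies their collective block with $\L_i^{up}(K_1) \otimes I_{V(K_2)}$ and the vertical block with $I_{V(K_1)} \otimes \L_i^{up}(K_2)$; since the two blocks share no incidence they decouple, giving the direct sum (\ref{Cart2}). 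The main obstacle to watch is exactly this decoupling step, namely verifying that the Cartesian product admits no \emph{mixed} higher face whose boundary mingles horizontal and vertical $i$-faces; once that point is pinned down, the rest is bookkeeping.
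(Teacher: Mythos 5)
Your proof is correct and takes essentially the same approach as the paper: both arguments partition the $i$- and $(i+1)$-faces of $K_1\Box K_2$ by their fixed coordinate (with $i\ge 1$ guaranteeing the horizontal and vertical families are disjoint), verify that no incidence crosses between the resulting blocks, and then obtain (\ref{Cart2}) from (\ref{DefL}) together with the weight convention (\ref{Cart-weight}). The only cosmetic difference is that the paper identifies each block via the subcomplex isomorphisms $K_1\Box\{v\}\cong K_1$ and $\{u\}\Box K_2\cong K_2$, whereas you classify the faces and check the sign-preserving incidence structure directly.
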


\begin{proof}
Let $v$ be a vertex of $K_2$.
Taking $\{v\}$ as a complex consisting of the only vertex $v$, we have the Cartesian product $K_1 \Box \{v\}$, which is isomorphic to $K_1$.
So, $B_i(K_1 \Box \{v\})$ is isomorphic to $B_i(K_1)$, and hence
$B_i(K_1 \Box K_2)$ contains $|V(K_2)|$ disjoint copies of $B_i(K_1)$, namely, $B_i(K_1 \Box \{v\})$ for $v \in V(K_2)$.
Similarly, $B_i(K_1 \Box K_2)$ contains $|V(K_1)|$ disjoint copies of $B_i(K_2)$, namely $B_i(\{u\} \Box K_2)$ for $u \in V(K_1)$.

Observe that $B_i(K_1 \Box v)$ and $B_i(u \Box K_2)$ share no common vertices as $F \times v \ne u \times F'$ for any $F \in S_i(K_1) \cup S_{i+1}(K_1)$ and $F' \in S_i(K_2) \cup S_{i+1}(K_2)$.
So, $B_i(K_1 \Box K_2)$ contains the union of following disjoint graphs:
$|V(K_2)|$ copies of $B_i(K_1)$ and $|V(K_1)|$ copies of $B_i(K_2)$.
Also, there exists no edges between any two ones of the above graphs, as
$F \times v \notin \p (\bar{F}\times v') \cup \p (u \times \bar{F'})$ and
$u \times F' \notin \p(u' \times \bar{F'}) \cup  \p(\bar{F} \times v)$ for any
$F \in S_i(K_1)$, $F'\in S_i(K_2)$, $\bar{F} \in S_{i+1}(K_1)$,  $\bar{F'} \in S_{i+1}(K_2)$, $u \ne u'$ and $v \ne v'$.
So we prove the first result.

By the above discussion, $\L_i^{up}(K_1\Box K_2)$ is a direct sum of  $\L_i^{up}(K_1\Box \{v\})$ for $v \in V(K_2)$ and $\L_i^{up}(\{u\}\Box K_2)$ for $u \in V(K_1)$, which implies that Eq. (\ref{Cart2}) by noting that $w(F \times v)=w_1(v)$ and $w(u\times F')=w_2(F')$.
\end{proof}

Note that $(K_1\Box K_2)^{(1)}=K_1^{(1)} \Box K_2^{(1)}$, and $K_1^{(1)} \Box K_2^{(1)}$ is connected if and only if $K_1^{(1)}$ and $K_1^{(2)}$ are both connected (as graphs) \cite{IK}.

\begin{thm}
Let $K_1, K_2$ be $(i+1)$-path connected complexes.
Then $B_i(K_1\Box K_2)$ is balanced if and only if $B_i(K_1)$ and $B_i(K_2)$ are both balanced.
\end{thm}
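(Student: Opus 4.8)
The plan is to split the argument into the two regimes $i\ge 1$ and $i=0$, which are governed by genuinely different structural facts established above.

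For $i\ge 1$ I would invoke Lemma \ref{Cart-i} directly. That lemma exhibits $B_i(K_1\Box K_2)$ as a disjoint union of $|V(K_2)|$ copies of $B_i(K_1)$ together with $|V(K_1)|$ copies of $B_i(K_2)$; moreover, since the orientation on $K_1\Box K_2$ is induced from those on the factors (so that $\sgn([F\times v],\p[\bar F\times v])=\sgn([F],\p[\bar F])$, and symmetrically for $u\times F'$), each such copy is isomorphic to $B_i(K_1)$ or to $B_i(K_2)$ \emph{as a signed graph}. Because a signed graph is balanced precisely when each of its connected components is, and isomorphic signed graphs share the same balancedness, it follows at once that $B_i(K_1\Box K_2)$ is balanced if and only if both $B_i(K_1)$ and $B_i(K_2)$ are balanced. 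I note in passing that this regime does not even require the path-connectedness hypothesis.

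The case $i=0$ is where I expect the real work to lie, since Lemma \ref{Cart-i} is stated only for $i\ge 1$ and the $0$-dimensional incidence graph does \emph{not} split as a disjoint union. Here I would argue spectrally. As $K_1,K_2$ are $1$-path connected, i.e.\ connected, the graph $(K_1\Box K_2)^{(1)}=K_1^{(1)}\Box K_2^{(1)}$ is connected, so $K_1\Box K_2$ is $1$-path connected and the equality clause of Theorem \ref{main} applies to it: $B_0(K_1\Box K_2)$ is balanced if and only if $\lamax(\L_0^{up}(K_1\Box K_2))=\lamax(\Q_0^{up}(K_1\Box K_2))$. Evaluating both sides via the additive spectral formulas (\ref{spec}) of Lemma \ref{laplace} and (\ref{spec-sl}) of Remark \ref{signless} gives
\begin{align*}
\lamax(\L_0^{up}(K_1\Box K_2)) &= \lamax(\L_0^{up}(K_1))+\lamax(\L_0^{up}(K_2)),\\
\lamax(\Q_0^{up}(K_1\Box K_2)) &= \lamax(\Q_0^{up}(K_1))+\lamax(\Q_0^{up}(K_2)).
\end{align*}
By Theorem \ref{main} applied to each factor, $\lamax(\L_0^{up}(K_j))\le\lamax(\Q_0^{up}(K_j))$ for $j=1,2$. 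Hence the two sums coincide if and only if equality holds in each summand separately, i.e.\ $\lamax(\L_0^{up}(K_j))=\lamax(\Q_0^{up}(K_j))$ for both $j$; applying Theorem \ref{main} once more to the connected complexes $K_1$ and $K_2$ converts these into the balancedness of $B_0(K_1)$ and $B_0(K_2)$, finishing the case.

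The one point demanding care, and the main obstacle, is the reduction to connectedness in the $i=0$ case: to use the ``only if'' direction of Theorem \ref{main} (which requires $(i+1)$-path connectedness) for the product, one must first confirm that $K_1\Box K_2$ is $1$-path connected. This is precisely the cited fact that a Cartesian product of graphs is connected if and only if both factors are, which holds because $K_1$ and $K_2$ are connected by hypothesis. With that in place, the equality of largest eigenvalues decouples cleanly across the two factors by the sum decomposition above, and the equivalence follows.
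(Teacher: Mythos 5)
Your proof is correct and follows essentially the same route as the paper: for $i\ge 1$ it invokes the disjoint-union decomposition of Lemma \ref{Cart-i}, and for $i=0$ it combines the equality case of Theorem \ref{main} (after verifying $1$-path connectedness of the product) with the additive spectral formulas of Lemma \ref{laplace} and Remark \ref{signless}, decoupling the equality termwise via $\lamax(\L_0^{up}(K_j))\le\lamax(\Q_0^{up}(K_j))$. Your observations that the $i\ge 1$ case needs no connectedness hypothesis and that the copies are isomorphic as \emph{signed} graphs are accurate refinements of the paper's terser treatment.
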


\begin{proof}
Firstly, we consider the case of $i=0$.
As $K_1, K_2$ be $1$-path connected, $K_1^{(1)}$ and $K_1^{(2)}$ are both connected.
So, $(K_1\Box K_2)^{(1)}$ is connected and hence $K_1\Box K_2$ is $1$-path connected.
Suppose that $B_0(K_1 \Box K_2)$ is balanced.
Then by Lemma \ref{balance} and Remark \ref{signless},
\begin{equation}\label{0-bal-1}\lamax(\L_0^{up}(K_1\Box K_2))= \lamax(\Q_0^{up}(K_1\Box K_2))=\lamax(\Q_0^{up}(K_1))+\lamax(\Q_0^{up}(K_2)).
\end{equation}
So, also by Lemma \ref{balance},
\begin{equation}\label{0-bal-2}
\lamax(\L_0^{up}(K_1))=\lamax(\Q_0^{up}(K_1)), \lamax(\L_0^{up}(K_2))=\lamax(\Q_0^{up}(K_2)),
\end{equation}
which implies that $B_0(K_1)$ and $B_0(K_2)$ are both balanced.
Conversely, if $B_0(K_1)$ and $B_0(K_2)$ are both balanced, then Eq. (\ref{0-bal-2}) holds and hence Eq. (\ref{0-bal-1}) holds.
So $B_0(K_1 \Box K_2)$ is balanced.

For $i \ge 1$, by Lemma \ref{Cart-i}, $B_i(K_1 \Box K_2)$ is the union of following disjoint graphs:
$|V(K_2)|$ copies of $B_i(K_1)$, and $|V(K_1)|$ copies of $B_i(K_2)$.
So the result follows immediately in this case.
\end{proof}

\subsection{Duplication of motifs}
\begin{defi}\cite{HJ13B}
Let $K$ be a complex and $S$ be a collection of simplices in $K$.
The \emph{closure} of $S$ written $\cl S$ is the smallest subcomplex of $K$ that contains all simplices in $S$.
The \emph{star} of $S$ written $\st S$ is the set of all simplices in $K$ that have a face in $S$.
The \emph{link} of $S$, written $\lk S$,  is defined to be $\cl \st S-\st \cl S$.
\end{defi}

\begin{defi}\cite{HJ13B}
Let $K$ be a complex. The subcomplex $\Sigma$ of $K$ is a \emph{motif} if $\Sigma$ contains all simplices in $K$ on the vertices of $V(\Sigma)$.
\end{defi}

\begin{defi}\cite{HJ13B}\label{motif}
Let $\Sigma$ be a subcomplex of a complex $K$. $\Sigma$ is called a \emph{$i$-motif } if the following conditions hold.

(1) ($\forall F_1,F_2\in \Sigma$), if $F_1,F_2\subset F \in K$, then $F\in \Sigma$.

(2) dim $\lk\Sigma=i$.
\end{defi}

\begin{lem}\cite{HJ13B}
If $K$ is an $(i+1)$-path connected complex, then any motify satisfying (1) in Definition \ref{motif} will have a link of dimension $i$.
\end{lem}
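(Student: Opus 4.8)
The plan is to prove the contrapositive of the nontrivial direction and use a purely combinatorial dimension-counting argument. The statement asserts that if $K$ is $(i+1)$-path connected and $\Sigma$ is a subcomplex satisfying condition (1) of Definition \ref{motif}, then $\dim \lk \Sigma = i$. Since $\lk \Sigma = \cl\,\st\,\Sigma - \st\,\cl\,\Sigma$, the dimension of the link is governed by the faces $F \in \st\,\Sigma$ of maximal dimension that survive after removing $\st\,\cl\,\Sigma$. First I would translate the link condition into a statement about the existence of an $i$-face in $\lk\Sigma$ and the nonexistence of any face of dimension $> i$; the two inequalities $\dim\lk\Sigma \le i$ and $\dim\lk\Sigma \ge i$ will be handled separately.

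For the upper bound $\dim\lk\Sigma \le i$, I would argue by contradiction: suppose some face $G$ of dimension $\ge i+1$ lies in $\lk\Sigma$. By definition of the link, $G$ belongs to $\cl\,\st\,\Sigma$ but not to $\st\,\cl\,\Sigma$, so $G$ is a face of some simplex $F$ in $\st\,\Sigma$ yet $G$ itself has no face lying in $\Sigma$. The key tension to exploit is condition (1): whenever two faces $F_1, F_2 \in \Sigma$ are contained in a common $F \in K$, then $F \in \Sigma$. I expect that a face $G$ of dimension $\ge i+1$ sitting in the star but disjoint (as a subcomplex relation) from $\cl\,\Sigma$ will, together with the faces of $\Sigma$ it is joined to in $\st\,\Sigma$, force a common superface that condition (1) would pull back into $\Sigma$ — contradicting $G \notin \st\,\cl\,\Sigma$. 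Making this precise is the crux of the whole argument.

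For the lower bound $\dim\lk\Sigma \ge i$, this is where $(i+1)$-path connectedness enters essentially. I would show that the existence of an $i$-dimensional face in $\lk\Sigma$ is forced by the ability to travel from a facet meeting $\Sigma$ to one that does not, along an $(i+1)$-path. Concretely, pick an $(i+1)$-face incident to $\Sigma$ and an $(i+1)$-face not entirely contained in $\st\,\cl\,\Sigma$; $(i+1)$-path connectedness supplies a chain of $(i+1)$-faces consecutively sharing $i$-faces, and somewhere along this chain the path must cross the boundary of $\st\,\cl\,\Sigma$, producing an $i$-face that lies in $\cl\,\st\,\Sigma$ but not in $\st\,\cl\,\Sigma$, i.e. an $i$-face of $\lk\Sigma$. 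Combined with the upper bound, this yields $\dim\lk\Sigma = i$ exactly.

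The main obstacle I anticipate is the upper bound argument, specifically showing that no face of dimension exceeding $i$ can survive in the link. The subtlety is that condition (1) only applies to pairs of faces already in $\Sigma$, so I must carefully reconstruct, from a hypothetical high-dimensional link face $G$ and its attachment to $\Sigma$, two genuine faces of $\Sigma$ sharing a common superface; this bookkeeping about which vertices belong to $\Sigma$ versus to $G$ is where the proof could easily go wrong. I would set up clear notation separating $V(\Sigma)$ from the vertices of the candidate face and verify the hypothesis of condition (1) is literally met before invoking it. The lower bound, by contrast, should follow cleanly once the path-connectivity crossing argument is phrased correctly.
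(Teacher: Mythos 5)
This lemma is quoted from \cite{HJ13B} and the paper supplies no proof of it at all, so there is nothing internal to compare your plan against; judged on its own merits, your sketch has two genuine gaps, and they sit precisely where you assigned the hypotheses the wrong jobs. The deferred ``crux'' --- deriving $\dim\lk\Sigma\le i$ from condition (1) --- cannot be completed, because condition (1) can hold vacuously while the link dimension exceeds $i$. Concretely, let $K$ be the full simplex on $\{a,b,c,d\}$ and $\Sigma$ the motif consisting of the single vertex $a$. Then $K$ is $2$-path connected (any two triangles share an edge), and under the reading you implicitly adopt (``two genuine faces of $\Sigma$'' in a common superface) condition (1) is vacuous, since $\Sigma$ contains no two incomparable faces; yet $\lk\Sigma$ is the full triangle on $\{b,c,d\}$, of dimension $2>1=i$. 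So no amount of bookkeeping will produce your contradiction: the upper bound does not come from (1) but from the standing assumption, implicit in the setting of \cite{HJ13B}, that the $(i+1)$-faces are the facets of $K$; granted that, it is a one-line count --- any $G\in\lk\Sigma$ satisfies $G\subseteq F$ for some $F\in\st\Sigma$, $G$ has no vertex in $V(\Sigma)$ while $F$ has at least one, whence $\dim G\le\dim F-1\le i$. (Note also that read literally, (1) with $F_1=F_2$ forces $\st\Sigma\subseteq\Sigma$ and empties the link, so fixing the intended interpretation of (1) is step zero; your sketch flags bookkeeping worries but never does this.)

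Symmetrically, the step you expect to ``follow cleanly'' is where condition (1) is actually indispensable. Your crossing argument presupposes an $(i+1)$-face with no vertex in $V(\Sigma)$, i.e.\ one outside $\st\cl\Sigma$; if every $(i+1)$-face meets $V(\Sigma)$, the $(i+1)$-path never leaves the star and there is no crossing to exploit. The repair runs as follows: if some $(i+1)$-face $F$ meets $V(\Sigma)$ in exactly one vertex $v$, then $F\setminus\{v\}$ is already an $i$-face of $\lk\Sigma$ and no path is needed; otherwise every $(i+1)$-face meeting $V(\Sigma)$ contains two vertices $u,v\in V(\Sigma)$, and applying (1) to $\{u\},\{v\}\subset F$ forces $F\in\Sigma$ --- so if in addition no $(i+1)$-face avoided $V(\Sigma)$, purity would give $\Sigma=K$ and an empty link, contradicting that a motif to be duplicated is a proper subcomplex. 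In the one surviving case your mechanism is correct and can be phrased cleanly: along an $(i+1)$-path from a face in $\st\Sigma$ to a face disjoint from $V(\Sigma)$, the quantity $|F_j\cap V(\Sigma)|$ changes by at most one at each step, so at the first index where $F_j$ meets $V(\Sigma)$ and $F_{j+1}$ does not, the common $i$-face $F_j\cap F_{j+1}$ has no vertex in $V(\Sigma)$, lies in $\cl\st\Sigma$, and is therefore an $i$-face of $\lk\Sigma$. In short: your crossing idea for producing the $i$-face is sound, but the existence of the crossing is the real content and rests on (1) plus properness, while your planned use of (1) for the upper bound cannot succeed --- there the implicit dimension hypothesis, not (1), does the work.
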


\begin{defi}
Tow complexes $K$ and $L$ are isomorphic if there exists a bijection $f: V(K)\rightarrow V(L)$ such that $\{v_0,\ldots, v_k\}\in K$ if and only if $\{f(v_0),\ldots, f(v_k)\}\in L$.
\end{defi}

Now let $\Sigma$ be an $i$-motif of $K$ with
vertex set $V(\Sigma)=\{v_0,\ldots, v_k\}$.
Let $V(\lk\Sigma)=\{u_0,\ldots, u_m\}$.
 According to the definition of link, $V(\Sigma)\cap V(\lk\Sigma)=\emptyset$.
 Define a complex $\Sigma'$ with vertex set $V(\Sigma')=\{v'_0,\ldots, v'_k\}$,
 which is isomorphic to $\Sigma$ by the isomorphism  $f: v_i \mapsto v'_i$.
 Then
$$K^{\Sigma}:=K\cup \{\{v'_{i_0},\ldots, v'_{i_l},u_{j_1},\ldots, u_{j_s}\}\mid\{v_{i_0},\ldots, v_{i_l},u_{j_1},\ldots, u_{j_s}\}\in K\}$$
 is a complex obtained from $K$ by the \emph{duplication of the $i$-motif $\Sigma$}.
 Denote by $K_{\Sigma'}=(K \backslash \st \Sigma) \cup \st \Sigma'$, a subcomplex of $K^\Sigma$ which is isomorphic to $K$ via a map $\bar{f}$ with $\bar{f}|_{V(\Sigma)}=f$ and  $\bar{f}|_{V(K)\backslash V(\Sigma)}=id$.

\begin{thm}\label{Motif-balance}
Let $K$ be an $(i+1)$-path  connected complex and let $\Sigma$ be  an $i$-motif of $K$.
Then $B_i(K^{\Sigma})$ is balanced if and only if $B_i(K)$ is balanced.
\end{thm}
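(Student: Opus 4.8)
The plan is to realise $B_i(K^{\Sigma})$ as the union of two isomorphic copies of $B_i(K)$ glued along a common subgraph, and then to reduce balancedness to a consistent-switching argument built on Lemma \ref{balance} and Lemma \ref{reverse}.

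First I would record the structural decomposition of $K^{\Sigma}$. By construction $K^{\Sigma}=K\cup K_{\Sigma'}$, where $K_{\Sigma'}=(K\backslash \st \Sigma)\cup \st \Sigma'$ is isomorphic to $K$ via the map $\bar{f}$ with $\bar{f}|_{V(\Sigma)}=f$ and $\bar{f}|_{V(K)\backslash V(\Sigma)}=\mathrm{id}$. Writing $A:=K\backslash \st \Sigma$ for the faces containing no vertex of $\Sigma$, one has $K\cap K_{\Sigma'}=A$ and $\bar{f}|_{A}=\mathrm{id}$. The crucial point, immediate from the definition of the duplication, is that no face of $K^{\Sigma}$ contains both a vertex of $\Sigma$ and a vertex of $\Sigma'$: the faces of $K$ never use $\Sigma'$-vertices, and the new faces of $K_{\Sigma'}$ never use $\Sigma$-vertices.

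Next I would translate this into a decomposition of the incidence signed graph, claiming that $B_i(K^{\Sigma})=B_i(K)\cup B_i(K_{\Sigma'})$ with the two subgraphs meeting exactly in $B_i(A)$. For the edges: any incidence $(F,\bar{F})$ of $K^{\Sigma}$ has $\bar{F}$ either in $K$ or in $K_{\Sigma'}\backslash K$, and since deleting a vertex of $\bar{F}$ cannot introduce a vertex of the other copy, every boundary face $F\in\p \bar{F}$ lies in the same copy as $\bar{F}$; hence the edge belongs to $B_i(K)$ or to $B_i(K_{\Sigma'})$. The only subtlety is the interface: an $(i+1)$-face $v\ast\tau$ with a single $\Sigma$-vertex $v$ is incident to the link $i$-face $\tau\in A$, and because $\dim \lk \Sigma=i$ this $\tau$ is genuinely an $i$-face, with the duplicated face $v'\ast\tau$ incident to the same $\tau$. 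These interface edges are assigned to $B_i(K)$ and $B_i(K_{\Sigma'})$ respectively, and since $\bar{f}$ fixes $\tau$ their incidence signs correspond under $\bar{f}$. Thus $\bar{f}$ is an isomorphism of signed graphs $B_i(K)\to B_i(K_{\Sigma'})$ restricting to the identity on $B_i(A)$.

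With the decomposition in hand the two implications are short. If $B_i(K^{\Sigma})$ is balanced, then its subgraph $B_i(K)$ is balanced, since every cycle of $B_i(K)$ is a cycle of $B_i(K^{\Sigma})$ and hence positive. For the converse, suppose $B_i(K)$ is balanced. By Lemma \ref{balance} I would fix a switching $s$ taking $B_i(K)$ to $(B_i(K),+)$; transporting $s$ through $\bar{f}$ yields a switching $s'$ taking $B_i(K_{\Sigma'})$ to $(B_i(K_{\Sigma'}),+)$, and because $\bar{f}$ is the identity on $B_i(A)$ the two switchings agree on the shared vertices. The combined switching is therefore well defined on $B_i(K^{\Sigma})$ and makes every edge positive, so $B_i(K^{\Sigma})$ is balanced; in the language of Lemma \ref{reverse} this amounts to choosing an orientation of $K$ with all incidence signs $+1$ and extending it to $K^{\Sigma}$ through $\bar{f}$. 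The step I expect to require the most care is the edge decomposition, specifically checking that the interface edges are accounted for exactly once on each side and that their signs match under $\bar{f}$; this is precisely where the hypotheses $\dim \lk \Sigma=i$ and $(i+1)$-path connectedness enter, ensuring that the relevant link faces sit in dimension $i$ so that the gluing occurs entirely within $B_i$.
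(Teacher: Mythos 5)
Your proof is correct, but its converse direction runs along a genuinely different track from the paper's. Both arguments rest on the same structural facts --- $K^{\Sigma}=K\cup K_{\Sigma'}$ with $K\cap K_{\Sigma'}=K\backslash \st\Sigma$, and $\bar{f}$ a sign-preserving isomorphism $B_i(K)\to B_i(K_{\Sigma'})$ fixing the overlap once the orientations of the duplicated faces are transported through $\bar{f}$ --- but where you make the gluing decomposition $B_i(K^{\Sigma})=B_i(K)\cup B_i(K_{\Sigma'})$ explicit and then switch the two copies simultaneously to the all-positive signature (Lemma \ref{balance} together with Lemma \ref{reverse}(1), consistency on the shared vertices of $B_i(K\backslash \st\Sigma)$ coming from $\bar{f}$ restricting to the identity there), the paper instead argues cycle by cycle: given a cycle $C$ of $B_i(K^{\Sigma})$ meeting both $\st\Sigma$ and $\st\Sigma'$, it replaces each face of $S_i(\st\Sigma')\cup S_{i+1}(\st\Sigma')$ on $C$ by its $\bar{f}^{-1}$-image, obtaining a circuit $\tilde{C}$ in $B_i(K)$ with $\varsigma(C)=\varsigma(\tilde{C})$, and then decomposes $\tilde{C}$ into cycles of $B_i(K)$, each positive by hypothesis. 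Your switching argument buys a cleaner endgame: it sidesteps the circuit-into-cycles decomposition (where one must in principle allow repeated edges, whose signs square to $+1$ --- a point the paper glosses over), and it even produces an explicit orientation of $K^{\Sigma}$ in which every incidence sign is $+1$; the price is that you must verify the edge partition at the interface, which you do correctly --- every $(i+1)$-face of $K^{\Sigma}$ lies in $K$ or in $K_{\Sigma'}$, and since both are subcomplexes all of its boundary $i$-faces lie in the same copy, so each edge of $B_i(K^{\Sigma})$ belongs to one of the two signed subgraphs on which your combined switching is defined. One peripheral remark in your closing sentence is off: neither your argument nor the paper's actually invokes $(i+1)$-path connectedness; in the paper that hypothesis serves, via the lemma preceding the theorem, to guarantee that a motif satisfying condition (1) of Definition \ref{motif} automatically has a link of dimension $i$, and the balancedness equivalence as you proved it holds for any $i$-motif.
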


\begin{proof}
As $K$ is a subcomplex of $K^{\Sigma}$,  $B_i(K)$ is a subgraph of $B_i(K^{\Sigma})$.
So, if  $B_i(K^{\Sigma})$ is balanced, then $B_i(K)$ is balanced.

Conversely, suppose that $B_i(K)$ is balanced.
Then $B_i(K_{\Sigma'})$ is also balanced as $K_{\Sigma'}$ is isomorphic to $K$.
Let $C$ be any cycle of $B_i(K^{\Sigma})$.
If $C$ is contained in $B_i(K)$ or $B_i(K_{\Sigma'})$, then $C$ is positive as $B_i(K)$ and $B_i(K_{\Sigma'})$ are both balanced.
 Otherwise, $C$ contains both faces $\bar{F}\in \st\Sigma$ and $\bar{F'}\in \st\Sigma'$. Note that there is an isomorphism $\bar{f}$ from $K$ to $K_{\Sigma'}$.
So, $\bar{f}$ induces a bijection between faces in $\st\Sigma$ and the faces in $\st\Sigma^{'}$, together with their orientations, namely, if $[w_0,\ldots,w_t]$ is an oriented face of $\st\Sigma$, then $[\bar{f}(w_0),\ldots,\bar{f}(w_t)]$ is an oriented face of $\st\Sigma'$.
 Now replace each $F'\in S_i(\st\Sigma')\cup S_{i+1}(\st \Sigma')$ appeared in $C$ by $\bar{f}^{-1}(F')\in S_i(\st\Sigma)\cup S_{i+1}(\st \Sigma)$.
 We get a circuit $\tilde{C}$ in $B_i(K)$, which is a union of edge-disjoint cycles in $B_i(K)$, say $C_1,\ldots,C_t$.
 Then
$$\varsigma(C)=\varsigma(\tilde{C})=\varsigma(C_1)\times\cdots\times \varsigma(C_t)=1,$$
as each $C_i$ in $B_i(K)$ is positive.
So $B_i(K^{\Sigma})$ is balanced.
\end{proof}

By Theorem \ref{Motif-balance}, one can apply the duplication of motifs to construct an infinite family of $(i+1)$-path connected complexes with $B_i(K)$ being balanced for each $i \ge 0$.

\end{document}